\newtheorem{theorem}{Theorem}[section]
\newtheorem{lemma}[theorem]{Lemma}
\theoremstyle{definition}
\newtheorem{definition}[theorem]{Definition}
\newtheorem{proposition}[theorem]{Proposition}
\newtheorem{example}[theorem]{Example}
\newtheorem{remark}[theorem]{Remark}
\newtheorem{corollary}[theorem]{Corollary}
\numberwithin{equation}{section}
\begin{document}

\title[Burchnall-Chaundy theory for quadratic algebras]{A first approach to the Burchnall-Chaundy theory for quadratic algebras having PBW bases}

\author{Arturo Ni\~no}
\address{Universidad Nacional de Colombia - Sede Bogot\'a}
\curraddr{Campus Universitario}
\email{dieaninotor@unal.edu.co}

\author{Mar\'ia Camila Ram\'irez}
\address{Universidad Nacional de Colombia - Sede Bogot\'a}
\curraddr{Campus Universitario}
\email{macramirezcu@unal.edu.co }

\author{Armando Reyes}
\address{Universidad Nacional de Colombia - Sede Bogot\'a}
\curraddr{Campus Universitario}
\email{mareyesv@unal.edu.co}

\thanks{The authors were supported by the research fund of Faculty of Science, Code HERMES 53880, Universidad Nacional de Colombia - Sede Bogot\'a, Colombia.}

\subjclass[2020]{11E88, 16S36, 16S37, 16W50, 17A45}

\keywords{Burchnall-Chaundy theory, Sylvester matrix, resultant, quadratic algebra, PBW basis}

\date{}

\dedicatory{Dedicated to the memory of Emma Previato}

\begin{abstract}

In this paper, we present a first approach toward a Burchnall-Chaundy theory for the skew Ore polynomials of higher order generated by quadratic relations defined by Golovashkin and Maksimov \cite{GolovashkinMaksimov1998}.  

\end{abstract}

\maketitle


\section{Introduction}\label{introduction}

Burchnall and Chaundy written a series of papers \cite{BC1923, BC1928, BC1931} where they were interested in to find an algebraic curve that vanishes on two commuting differential operators. The ring of differential operators considered by them is given by the skew polynomial ring or Ore extension (introduced by Ore \cite{Ore1931, Ore1933}) of derivation type  $C^{\infty}(\mathbb{R}, \mathbb{C})[D;{\rm id}, \delta]$, where $\delta$ is the ordinary derivation. They found that given $P=\sum_{i=0}^{n}p_{i}D^{i}$ and $Q = \sum_{i=0}^{m}q_{i}D^{i}$ two differential operators such that $PQ-QP=0$, where $p_{i}$, $q_{i}$ are complex valued functions, there exists an annihilating complex algebraic curve $\mathcal{BC}$ determinated by some polynomial $F(x,y)$ such that $F(P,Q)=0$. The points on this curve have coordinates which are exactly the eigenvalues associated with the operators $P$ and $Q$. Since then, in the literature this theory is known as \textit{Burchnall-Chaundy theory} ($\mathcal{BC}$ theory), and the curve $F(x,y)$ is called {\em Burchnall-Chaundy curve}.  

The original proof of Burchnall and Chaundy relies in analytical methods. However, several authors have formulated different algebraic approaches toward this theory for families of noncommutative rings related with Ore extensions (e.g., Amitsur  \cite{Amitsur1958}, Flanders \cite{Flanders1955}, Carlson and Goodearl \cite{CarlsonGoodearl1980, Goodearl1983}, Hellstrom and Silvestrov \cite{Hellstrom2, HellstromSilvestrov2007}, Bell and Small \cite{Bell2009, BellSmall2004}, Richter \cite{RichterPseudo2016}, Ni\~no and Reyes \cite{NinoReyes2023}, Reyes and Su\'arez \cite{ReyesSuarez2018}, and references therein). For instance, with the aim of constructing the Burchnall-Chaundy curve in the setting of some families Ore extensions, the notions of \textit{determinant polynomial, subresultant} and {\em resultant} have been investigated by Li in his PhD thesis \cite{LiPhD1996} (see also \cite{Li1998}), Richter in his PhD thesis \cite{RichterPhD2014}, Richter and Silvestrov \cite{RichterSilvestrov2012}, and Larsson \cite{Larsson2014}. It is important to note that the concepts of resultant and determinant polynomials are intimately related since both aim to encode the common factors of pairs (right factors in the noncommutative case) of polynomials. For instance, the theory of the commutative subresultant has been of interest since the work of Collins, Brown and Traub \cite{BrownTraub1971, Collins1967, Loos1982, Mishra1993}, which seeks to establish algorithms that solve the problem of finding greatest common divisors in rings of polynomial type, while the concept of multivariate resultant serves as a criterion for determining the existence of common factors in a pair of homogeneous polynomials \cite{Cayley1848, Coxetal2015}. For example, Chardin \cite{Chardin1991} presented a subresultant theory for linear ordinary differential operators, while Carra'-Ferro \cite{Carra-Ferro1994, Carra-Ferro1997}, McCallum \cite{McCallum}, Rueda and Sendra \cite{RuedaSendra}, and Zhang, Yuan and Gao \cite{ZhangYuanGao}, established different results on differential resultants. Several details, from classical ones to those obtained relatively recently, about resultant theory and their importance in physical research were presented by Morozov and Shakirov \cite{Morozov2010}. 

Motivated by the algebraic approach to the Burchnall-Chaundy theory for noncommutative algebras such as described above, our aim in this paper is to present a first approach to the $\mathcal{BC}$ theory for {\em skew Ore polynomials of higher order generated by homogenous quadratic relations} defined and studied by Golovashkin and Maksimov in several papers \cite{GolovashkinMaksimov1998, GolovashkinMaksimov2005, Maksimov2000, Maksimov2002}. Since some of its ring-theoretical, homological, geometrical and combinatorial properties have been investigated (e.g., \cite{ChaconPhD2022, ChaconReyes2023b, Fajardoetal2020, NinoRamirezReyes2020, NinoReyes2020, NinoReyes2023, Rosengren2000} and references therein), this article can be considered as a contribution to the study of algebraic characterizations of these objects.

The organization of the paper is as follows. Section \ref{preliminaries} contains some preliminaries about quadratic algebras that will be studied throughout the paper. In Section \ref{productsofelements}, Propositions \ref{proposition1NR}, \ref{proposition2NR}, \ref{proposition3NR}, \ref{proposition4NR}, and \ref{proposition5NR} present combinatorial properties on products of elements in these algebras. Next, in Section \ref{SMR} we consider the notions of Sylvester matrix and resultant for quadratic algebras with the aim of determining whether this object characterize common right factors of polynomials. In Section \ref{DeterminantPolynomials}, by using the concept of determinant polynomial, we formulate the version of $\mathcal{BC}$ theory for these algebras (Theorem \ref{BC_corollary}). Finally, Section \ref{ExamplesBC} contains illustrative examples of the results formulated in the previous sections.

Throughout the paper, the term ring means an associative ring (not necessarily commutative) with identity. The symbol $\Bbbk$ denotes an arbitrary field, and all algebras are $\Bbbk$-algebras. The letters $\mathbb{N}$ and $\mathbb{Z}$ denote the set of natural numbers including zero, and the ring of integer numbers, respectively. $M_{r\times c}(R)$ means the ring of matrices of size $r\times c$ ($r\leq c$) with entries in the ring $R$.

\section{Preliminaries}\label{preliminaries}

For a ring $R$, the {\em Ore extensions} introduced by Ore \cite{Ore1931, Ore1933} consist of the uniquely representable elements $r_0 + r_1x + \dotsb + r_kx^k$, $k = k(r) = 0, 1, 2, \dotsc, r_i\in R$, with the commutation relation $xr = \sigma(r)x + \delta(r)$, where $\sigma$ is an endomorphism of $R$ and $\delta$ is a $\sigma$-derivation of $R$. Different generalizations of these objects, called {\em skew Ore polynomials}, have been introduced and studied by several authors (e.g., Cohn \cite{Cohn1961, Cohn1971}, Dumas \cite{Dumas1991}, and Smits \cite{Smits1968}) considering the commutation relation $xr = \Psi_1(r)x + \Psi_2(r)x^2 + \dotsb$, where the $\Psi$'s are endomorphisms of $R$. Nevertheless, there are cases of quadratic algebras such as Clifford algebras, Weyl-Heisenberg algebras, and Sklyanin algebras, in which this commutation relation is not sufficient to define the noncommutative structure of the algebras since a free non-zero term $\Psi_0$ is required (e.g., Ostrovskii and Samoilenko \cite{OstrovskiiSamoilenko1992}). Precisely, {\em skew Ore polynomials of higher order} with commutation relation with this free term, that is, $xr = \Psi_0(r) + \Psi_1(r)x + \dotsb + \Psi_n(r)x^n + \dotsb$, were studied by Maksimov \cite{Maksimov2000}, where, for every $r, s \in R$, the free term $\Psi_0$ satisfies the relation
\[
\Psi_0(rs) = \Psi_0(r)s + \Psi_1(r)\Psi_0(s) + \Psi_2(r)\Psi_0^{2}(s) + \dotsb 
\]

or the equivalent operator equation $\Psi_0 r = \Psi_0(r) + \Psi_1(r)\Psi_0 + \Psi_2(r)\Psi_0^{2}$, where $r$ is considered as the operator of left multiplication by $r$ on $R$. One may consider $\Psi_0$ as a singular differentiation operator with respect to $\Psi_1, \Psi_2, \dotsc$, but where $\Psi_1$ need not be an endomorphism of $R$. Notice that a general algebra of skew polynomials with indeterminate $x$ over $R$ is also determined by certain linear operators $\Psi_k:R \to R$ such that for each $r\in R$, there is a unique representation $xr = \Psi_0(r) + \Psi_1(r)x + \dotsb + \Psi_{m(r)}x^{m(r)}$. Of course, if $R$ is a ring with one generator, then the algebra has a PBW basis. If $m_0 = 1$, then we obtain the classical Ore extensions \cite{Ore1933}. It it important to say that the associativity and uniqueness of the representation of the product $xr$ guarantee conditions on the set of operators $\{\Psi_k\}_{k\in \mathbb{N}}$ which are necessary and sufficient to determinate the algebra of skew polynomials (for more details, see \cite{Maksimov2000, Maksimov2002}).

Golovashkin and Maksimov \cite{GolovashkinMaksimov2005} investigated the representation of the $\Bbbk$-algebra $Q(a, b, c)$ (${\rm char}(\Bbbk) = 0$) defined by a {\em quadratic relation} in two generators $x, y$ given by 
\begin{equation}\label{GolovashkinMaksimov2005(1)}
        yx = ax^2 + bxy + cy^2,
\end{equation}
    
as an algebra of Ore polynomials of higher degree with commutation relation (\ref{GolovashkinMaksimov2005(1)}) with $a, b, c$ belong to $\Bbbk$. As one can check, the algebra generated by the relation is represented in the form of an algebra of skew Ore polynomials of higher order if the elements $\{x^my^n\}$ form a linear basis of the algebra. Hence, this algebra can be defined by a system of linear mappings $\Psi_0, \Psi_1, \Psi_2, \dotsc$ of the algebra of polynomials $\Bbbk[x]$ into itself such that for an arbitrary element $p(x)\in \Bbbk[x]$, $yp(x) = \Psi_0(p(x)) + \Psi_1(p(x))y + \dotsb + \Psi_k(p(x))y^{k}$, $k = k(p(x)),\ k = 0, 1, 2\dotsc$ If this representation exists, then one can to obtain the relations between the operators $\Psi_0, \Psi_1, \Psi_2, \dotsc$ They found conditions for such an algebra $Q(a, b, c)$ to be expressed as a skew polynomial with generator $y$ over the polynomial ring $\Bbbk[x]$ (cf. \cite{GolovashkinMaksimov1998}), and proved that these conditions are equivalent to the existence of a {\em Poincar\'e-Birkhoff-Witt} (PBW for short) basis, i.e., basis of the form $\{x^my^n\}$. A useful tool in the study of PBW bases for quadratic algebras defined by (\ref{GolovashkinMaksimov2005(1)}) is the matrix given by
\begin{equation}\label{matrixPBW}
    M := \begin{pmatrix}
\;\;\,b & a \\ -c & 1
\end{pmatrix},
\end{equation}

which is called the {\em companion} matrix for (\ref{GolovashkinMaksimov2005(1)}) \cite[Section 2.2]{GolovashkinMaksimov2005}. If the lower-right elements of the matrices $M^l$ does not vanish for all $l\in \mathbb{N}$, then $Q(a, b, c)$ has a PBW basis of the form $\{x^my^n\mid m, n \in \mathbb{N}\}$ \cite[Proposition 4]{GolovashkinMaksimov2005}. 

If $b+ac\neq 0$ a necessary and sufficient condition for the monomials $\{x^my^n\mid m, n \in \mathbb{N}\}$ form a basis of $Q(a, b, c)$ is precisely that the lower-right elements of the matrices $M^l$ does not vanish for all $l\in \mathbb{N}$. If $a = c = 1$ and $b = -1$, then the elements $\{x^my^n\mid m, n \in \mathbb{N}\}$ are linearly independent but do not form a PBW basis of $Q(a, b, c)$ \cite[Propositions 5 and 10]{GolovashkinMaksimov2005}. Notice that when two of the three coefficients $a, b$, and $c$ are equal to zero, the resulting algebras are given by three types: (i) $yx = ax^2$ (ii) $yx = cy^2$ and (iii) $yx = bxy$. The set $\{x^my^n\mid m, n\in \mathbb{N}\}$ is a PBW basis for algebras (i) and (ii), while the algebra (iii) is an Ore extension \cite[Section 1.5]{GolovashkinMaksimov2005}. 

From the facts above, it follows that every polynomial $f(x, y) \in Q(a, b, c)$ has a unique representation as a finite sum of the monomials of the PBW basis in the form $f(x,y) = \sum_{i,j} a_{i,j}x^{i}y^{j},\quad a_{i, j}\in \Bbbk$. As usual, the {\em degree} of every monomial $x^{i}y^{j}$ is defined to be $i+j$, and the {\em degree} of $f(x,y)$ is the greatest degree of the monomials that appear in the expansion of $f(x,y)$. This degree is known as the \textit{total degree} of $f(x,y)$. As in the commutative polynomial ring $\Bbbk[x, y]$, we also have the expression $f(x,y)=\sum_{i=0}^{n} p_{i}(x)y^{i}$, where the $p_i(x)$'s are elements of the commutative polynomial ring $\Bbbk[x]$. We call this representation of $f(x,y)$ the \textit{the normal form} of $f(x,y)$, and we will refer to $n$ as the degree of $f$ with respect to the indeterminate $y$.

\subsection{Products of homogeneous elements}\label{productsofelements}

We consider some combinatorial properties of products of elements for several values of $a, b$ and $c$ in the defining relation (\ref{GolovashkinMaksimov2005(1)}). It is possible that these are found in the literature; however, we could not find them explicitly somewhere, so we provide their corresponding proofs.

Let $Q(a,b,c)$ be a quadratic algebra having a PBW basis of the form $\{x^{m}y^{n}\mid m, n\in \mathbb{N}\}$. We divide our treatment in the Cases \ref{a-1c}, \ref{a00} and \ref{0b0}.

\subsubsection{Case $Q(a,-1,c)$}\label{a-1c}

As we mentioned before, Golovashkin and Maksimov proved that if $b=-1$ and $ac=1$, then the set $\{x^{n}y^{m}\mid n,m\in \mathbb{N}\}$ is not a PBW basis, so we consider the case $ac\neq 1$. From \cite[Proposition 3.8]{ChaconReyes2023}, we know that the set $\{x^{n}y^{m}\mid n,m\in \mathbb{N}\}$ is a PBW basis for $Q(a,-1,c)$, and by \cite[Lemma 3.9]{ChaconReyes2023}, the following commuting rules hold in this algebra:
\begin{enumerate}
    \item [\rm (1)] If $k$ is even, then $yx^{k}=x^{k}y$ and $y^{k}x=xy^{k}$.
    \item [\rm (2)] If $k$ is odd, then $yx^{k}=ax^{k+1}-x^{k}y+cx^{k-1}y^{2}$ and $y^{k}x=ax^{2}y^{k-1}-xy^{k}+cy^{k+1}$.
\end{enumerate}

We can go further and prove the facts established in Propositions \ref{proposition1NR}, \ref{proposition2NR} and \ref{proposition3NR}.

\begin{proposition}\label{proposition1NR}
In $Q(a,-1,c)$, the following formulas hold:
\begin{enumerate}
    \item [\rm (1)] $y^{n}x^{m}=x^{m}y^{n}$, if $m$ is even or $n$ is even.
    \item [\rm (2)] $y^{n}x^{m}=ax^{m+1}y^{n-1}-x^{m}y^{n}+cx^{m-1}y^{n+1}$, if $m$ and $n$ are odd.
\end{enumerate}
\end{proposition}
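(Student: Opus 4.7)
The plan is to derive both parts from Lemma 3.9 by elementary single inductions, proving part (1) first and then invoking it within the proof of part (2).

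For part (1), I would split into two subcases according to which of $m,n$ is assumed even. If $m$ is even, Lemma 3.9 gives the base case $yx^{m}=x^{m}y$, and induction on $n$ yields $y^{n+1}x^{m}=y(y^{n}x^{m})=y(x^{m}y^{n})=(yx^{m})y^{n}=x^{m}y^{n+1}$. Symmetrically, if $n$ is even, Lemma 3.9 gives $y^{n}x=xy^{n}$, and induction on $m$ yields $y^{n}x^{m+1}=(y^{n}x)x^{m}=xy^{n}x^{m}=x\cdot x^{m}y^{n}=x^{m+1}y^{n}$, using the inductive hypothesis at the last step. Either argument covers the overlap where both $m$ and $n$ are even.

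For part (2), suppose $m$ and $n$ are both odd. Then $n-1$ is even, so by part (1), $y^{n-1}$ commutes with every power of $x$. Writing $y^{n}x^{m}=y^{n-1}(yx^{m})$ and expanding $yx^{m}$ via Lemma 3.9 (valid since $m$ is odd),
\[
y^{n}x^{m} \;=\; y^{n-1}\bigl(ax^{m+1}-x^{m}y+cx^{m-1}y^{2}\bigr) \;=\; a\,y^{n-1}x^{m+1} \;-\; y^{n-1}x^{m}\,y \;+\; c\,y^{n-1}x^{m-1}\,y^{2}.
\]
Applying part (1) to each of the products $y^{n-1}x^{m+1}$, $y^{n-1}x^{m}$, and $y^{n-1}x^{m-1}$, and then collecting the trailing powers of $y$, I obtain $ax^{m+1}y^{n-1}-x^{m}y^{n}+cx^{m-1}y^{n+1}$, as claimed.

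No genuine obstacle is anticipated: the base cases are exactly the content of Lemma 3.9 and each induction is a one-line manipulation. The only subtlety is organizational, namely that part (1) must be established first (with its two parity subcases treated separately) so that in part (2) the even exponent $n-1$ is available to push $y^{n-1}$ through every monomial arising in the expansion of $yx^{m}$.
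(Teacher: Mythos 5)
Your proof is correct and follows essentially the same route as the paper: part (1) extends the commuting rules of \cite[Lemma 3.9]{ChaconReyes2023} to powers (the paper phrases this via centrality of $y^{n}$ rather than an explicit induction), and part (2) reduces to a single application of the odd-case rule combined with part (1). The only cosmetic difference is that in part (2) you peel off a $y$ on the left and expand $yx^{m}$, whereas the paper peels off an $x$ on the right and expands $y^{n}x$; both computations are mirror images and yield the same identity.
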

\begin{proof}
\begin{enumerate}
    \item [\rm (1)] If $n$ is even, then $y^{n}$ commutes with $x$, whence $y^{n}$ belongs to the center of $Q(a,-1,c)$. In particular, $y^{n}x^{m}=x^{m}y^{n}$. The argument for $x^{m}$ is analogous.
    \item [\rm (2)] In this case, we have
    \begin{align*}
        y^{n}x^{m}&=y^{n}x^{m-1}x = x^{m-1}y^{n}x\\
        &=x^{m-1}(ax^{2}y^{n-1}-xy^{n}+cy^{n+1}) = ax^{m+1}y^{n-1}-x^{m}y^{n}+cx^{m-1}y^{n+1}.
    \end{align*}
\end{enumerate}
\end{proof}

In the following, the symbol $\lfloor \square \rfloor$  denote the greatest integer less than or equal to $\square$.

\begin{proposition}\label{proposition2NR}
Suppose that $n$ is an odd number. 
\begin{itemize}
    \item[\rm (1)] If $m$ is an even number, then
    \begin{align*}
        y^{n}\left(\sum_{j=0}^{m}e_{j}x^{m-j}y^{j} \right) = &\ \sum_{j=0}^{m}(-1)^{j}e_{j}x^{m-j}y^{n+j}\\
        & +\sum_{j=0}^{\frac{m}{2}-1}e_{(2j+1)}\left(ax^{m-2j}y^{n+2j}+cx^{m-(2j+2)}y^{n+(2j+2)}\right).    
    \end{align*}
    \item [\rm (2)] If $m$ is an odd number, then we have 
    \begin{align*}
        y^{n}\left(\sum_{j=0}^{m}e_{j}x^{m-j}y^{j} \right) =  &\ \sum_{j=0}^{\lfloor \frac{m}{2} \rfloor} e_{2j}(ax^{m-2j+1}y^{2j+n-1}\\
        &- x^{m-2j}y^{2j+n}+ cx^{m-(2j+1)}y^{2j+n+1})\\
        &+ \sum_{j=0}^{\lfloor \frac{m}{2} \rfloor}e_{(2j+1)}x^{m-(2j+1)}y^{n+(2j+1)}.
    \end{align*} 
    \end{itemize}
\end{proposition}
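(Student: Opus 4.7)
The plan is to expand $y^n \sum_{j=0}^m e_j x^{m-j}y^j$ by linearity and apply Proposition \ref{proposition1NR} to each product $y^n x^{m-j}$, then multiply on the right by $y^j$ (which keeps each resulting monomial in the PBW basis). Since $n$ is odd, that proposition bifurcates according to the parity of $m-j$: when $m-j$ is even one has $y^n x^{m-j}=x^{m-j}y^n$, and when $m-j$ is odd one picks up the three monomials $ax^{m-j+1}y^{n-1} - x^{m-j}y^{n} + cx^{m-j-1}y^{n+1}$. Thus the whole argument reduces to splitting the sum according to the parity of $j$ and reindexing.

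For part (1), the hypothesis that $m$ is even gives $m-j\equiv j\pmod 2$. The even-$j$ summands commute directly and contribute $e_j x^{m-j}y^{n+j}$, while each odd-$j$ summand produces $e_j\bigl(ax^{m-j+1}y^{n-1+j} - x^{m-j}y^{n+j} + cx^{m-j-1}y^{n+1+j}\bigr)$. Gathering the $x^{m-j}y^{n+j}$-contributions from both parities produces the $(-1)^j$ sum, and substituting $j=2k+1$ with $k=0,1,\dots,m/2-1$ on the remaining $a$- and $c$-pieces recovers the second sum in the statement. For part (2) the roles swap: with $m$ odd, $m-j$ is odd precisely when $j$ is even, so the three-term expansion applies to the even-$j$ summands (reindexed by $j=2k$, $k=0,\dots,\lfloor m/2\rfloor$) producing the first sum, while the odd-$j$ summands commute past $y^n$ and, after $j=2k+1$, yield the second sum.

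The whole calculation is bookkeeping once the parity split is set up, so no step is genuinely hard. The main point requiring care is tracking the index ranges after reindexing in terms of $k$, in particular the asymmetric appearance of $m/2-1$ in part (1) versus $\lfloor m/2\rfloor$ in part (2), and checking that the endpoint cases (e.g.\ $j=m-1$ in part (1) and $j=\lfloor m/2\rfloor$ in part (2)) do not generate negative powers of $x$; both checks follow immediately from the parity hypothesis on $m$.
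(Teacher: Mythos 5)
Your proposal is correct and follows essentially the same route as the paper's own proof: split the sum by the parity of $j$ (equivalently of $m-j$), apply Proposition \ref{proposition1NR} to commute $y^{n}$ past $x^{m-j}$, multiply on the right by $y^{j}$, and reindex to collect the $(-1)^{j}$ terms and the $a$- and $c$-contributions. The index-range bookkeeping you flag ($\frac{m}{2}-1$ versus $\lfloor \frac{m}{2}\rfloor$) is exactly the point where the paper's computation also takes care, so nothing is missing.
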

\begin{proof}
\begin{enumerate}
    \item [\rm (1)] Suppose that $m$ is an even number. We get
    {\small{
\begin{align*}   
y^{n}\left(\sum_{j=0}^{m}e_{j}x^{m-j}y^{j} \right)&=y^{n}\left(\sum_{j=0}^{\frac{m}{2}}e_{2j}x^{m-2j}y^{2j} \right)+y^{n}\left(\sum_{j=0}^{\frac{m}{2}-1}e_{(2j+1)}x^{m-(2j+1)}y^{2j+1} \right) \\
    &=\sum_{j=0}^{\frac{m}{2}}e_{2j}x^{m-2j}y^{n+2j} +\sum_{j=0}^{\frac{m}{2}-1}e_{(2j+1)}y^{n}x^{m-(2j+1)}y^{2j+1}\\
    &=\sum_{j=0}^{\frac{m}{2}}e_{2j}x^{m-2j}y^{n+2j} \\
    &\ \ +\sum_{j=0}^{\frac{m}{2}-1}e_{(2j+1)}\left(ax^{m-2j}y^{n-1}-x^{m-(2j+1)}y^{n}+cx^{m-(2j+2)}y^{n+1}\right)y^{2j+1}.
\end{align*}
}}

Equivalently, 
{\small{
\begin{align*} 
    y^{n}\left(\sum_{j=0}^{m}e_{j}x^{m-j}y^{j} \right) &=\sum_{j=0}^{\frac{m}{2}}e_{2j}x^{m-2j}y^{n+2j}-\sum_{j=0}^{\frac{m}{2}-1}e_{(2j+1)}x^{m-(2j+1)}y^{n+(2j+1)}\\
    &\ \ +\sum_{j=0}^{\frac{m}{2}-1}e_{(2j+1)} \left(ax^{m-2j}y^{n+2j}+cx^{m-(2j+2)}y^{n+(2j+2)}\right) \\
    &=\sum_{j=0}^{m}(-1)^{j}e_{j}x^{m-j}y^{n+j}\\
    &\ \ +\sum_{j=0}^{\frac{m}{2}-1}e_{(2j+1)}\left(ax^{m-2j}y^{n+2j}+cx^{m-(2j+2)}y^{n+(2j+2)}\right).
\end{align*}
}}

\item [\rm (2)] If $m$ is odd, 
\begin{align*}
    y^{n}\left(\sum_{j=0}^{m}e_{j}x^{m-j}y^{j} \right)&= y^n \left(\sum_{j=0}^{\lfloor \frac{m}{2} \rfloor}e_{2j}x^{m-2j}y^{2j} \right)\\
    & \ \ + y^n \left(\sum_{j=0}^{\lfloor \frac{m}{2} \rfloor}e_{(2j+1)}x^{m-(2j+1)}y^{2j+1} \right),
    \end{align*}

or equivalently,
\begin{align*}
    y^{n}\left(\sum_{j=0}^{m}e_{j}x^{m-j}y^{j} \right) &=  \sum_{j=0}^{\lfloor \frac{m}{2} \rfloor}e_{2j}y^nx^{m-2j}y^{2j} + \sum_{j=0}^{\lfloor \frac{m}{2} \rfloor}e_{(2j+1)}x^{m-(2j+1)}y^{(n+(2j+1))}\\
    &= \sum_{j=0}^{\lfloor \frac{m}{2} \rfloor} e_{2j}(ax^{m-2j+1}y^{2j+n-1} - x^{m-2j}y^{2j+n}\\
    & \ \ + cx^{m-(2j+1)}y^{2j+n+1}) + \sum_{j=0}^{\lfloor \frac{m}{2} \rfloor}e_{(2j+1)}x^{m-(2j+1)}y^{n+(2j+1)}.
\end{align*}
\end{enumerate}
\end{proof}

The following formulas can be used for computing the product of two homogeneous polynomials. 

\begin{proposition}\label{proposition3NR}
Let $f(x,y), g(x,y)\in Q(a, -1, c)$ be two homogeneous polynomials given by  
\begin{equation}\label{fyg}
    f(x,y) = \sum_{i=0}^{m}e_{i}x^{m-i}y^{i},\ \ {\rm and}\ \  g(x,y) = \sum_{j=0}^{n}l_{j}x^{n-j}y^{j}.
\end{equation}

If 
\[
p = \begin{cases}
\lfloor \frac{m}{2} \rfloor, &\text{if $m$ is odd}\\
\frac{m}{2}, &\text{if $m$ is even,}
\end{cases} \quad \text{and} \quad q = \begin{cases}
\lfloor \frac{m}{2} \rfloor, &\text{if $m$ is odd}\\
\frac{m}{2} -1, &\text{if $m$ is even},
\end{cases}
\]

then we have the following equalities:
\begin{enumerate}
\item [\rm (1)] If $n$ is an even number, then
    \begin{align*}
    f(x,y)g(x,y) = &\ \sum^{p}_{i=0} \sum^{\frac{n}{2}}_{j=0} e_{2i}l_{2j}x^{(m+n)-(2i+2j)}y^{2i+2j}\\
    &+ \sum^{q}_{i=0} \sum^{\frac{n}{2}-1}_{j=0} e_{2i+1}l_{2j+1}(ax^{(m+n)-(2i+2j+1)}y^{(2i+2j+1)}\\
    &+ x^{(m+n)-(2i+2j+2)}y^{(2i+2j+2)} + cx^{(m+n) - (2i+2j+3)}y^{(2i+2j+3)}).
\end{align*}

\item [\rm (2)] On the other hand, if $n$ is an odd number, we get
    \begin{align*}
    f(x,y)g(x,y) = &\ \sum^{p}_{i=0} \sum^{\lfloor \frac{n}{2} \rfloor}_{j=0} e_{2i}l_{2j}x^{(m+n)-(2i+2j)}y^{(2i+2j)}\\
    &+ \sum^{q}_{i=0} \sum^{\lfloor \frac{n}{2} \rfloor}_{j=0} e_{2i+1}l_{2j+1}x^{(m+n)-(2i+2j+2)}y^{(2i+2j+2)}.
\end{align*}
\end{enumerate}
\begin{proof}
\begin{enumerate}
    \item [\rm (1)] Suppose that $n$ is even. We have
    \begin{align*}
     f(x,y)g(x,y) = &\ \left(\sum_{i=0}^{m}e_{i}x^{m-i}y^{i}\right)\left(\sum_{j=0}^{n}l_{j}x^{n-j}y^{j}\right)\\
     = &\ \sum_{i=0}^{m}\sum_{j=0}^{n} e_{i}l_{j}x^{m-i}y^{i}x^{n-j}y^{j}\\
    = &\ \sum^{p}_{i=0} \sum^{\frac{n}{2}}_{j=0} e_{2i}l_{2j}x^{m-2i}y^{2i}x^{n-2j}y^{2j}\\
    & + \sum^{q}_{i=0} \sum^{\frac{n}{2}-1}_{j=0} e_{2i+1}l_{2j+1}x^{m-(2i+1)}y^{2i+1}x^{n-(2j+1)}y^{2j+1}\\
    = &\ \sum^{p}_{i=0} \sum^{\frac{n}{2}}_{j=0} e_{2i}l_{2j}x^{(m+n)-(2i+2j)}y^{2i+2j}\\
    & + \sum^{q}_{i=0} \sum^{\frac{n}{2}-1}_{j=0} e_{2i+1}l_{2j+1}x^{m-(2i+1)}(ax^{n-2j}y^{2i} \\
    & -  x^{n-(2j+1)}y^{2i+1} + cx^{n-(2j+2)}y^{2i+2}) y^{2j+1}\\
    = &\ \sum^{p}_{i=0} \sum^{\frac{n}{2}}_{j=0} e_{2i}l_{2j}x^{(m+n)-(2i+2j)}y^{2i+2j}\\
    &+ \sum^{q}_{i=0} \sum^{\frac{n}{2}-1}_{j=0} e_{2i+1}l_{2j+1}(ax^{(m+n)-(2i+2j+1)}y^{(2i+2j+1)}\\
    &+ x^{(m+n)-(2i+2j+2)}y^{(2i+2j+2)} + cx^{(m+n) - (2i+2j+3)}y^{(2i+2j+3)}).
\end{align*}

\item [\rm (2)] Let $n$ be an odd number. We assert that
    \begin{align*}
     f(x,y)g(x,y) = &\ \left(\sum_{i=0}^{m}e_{i}x^{m-i}y^{i}\right)\left(\sum_{j=0}^{n}l_{j}x^{n-j}y^{j}\right) = \sum_{i=0}^{m}\sum_{j=0}^{n} e_{i}l_{j}x^{m-i}y^{i}x^{n-j}y^{j}\\
    = &\ \sum^{p}_{i=0} \sum^{\lfloor \frac{n}{2} \rfloor}_{j=0} e_{2i}l_{2j}x^{m-2i}y^{2i}x^{n-2j}y^{2j}\\
    & + \sum^{q}_{i=0} \sum^{\lfloor \frac{n}{2} \rfloor}_{j=0} e_{2i+1}l_{2j+1}x^{m-(2i+1)}y^{2i+1}x^{n-(2j+1)}y^{2j+1}\\
    = &\ \sum^{p}_{i=0} \sum^{\lfloor \frac{n}{2} \rfloor}_{j=0} e_{2i}l_{2j}x^{(m+n)-(2i+2j)}y^{(2i+2j)}\\
    & + \sum^{q}_{i=0} \sum^{\lfloor \frac{n}{2} \rfloor}_{j=0} e_{2i+1}l_{2j+1}x^{(m+n)-(2i+2j+2)}y^{(2i+2j+2)}.
\end{align*}
\end{enumerate}
\end{proof}
\end{proposition}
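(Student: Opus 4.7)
The plan is to expand the product $f(x,y)g(x,y)$ as a double sum and then normalize each monomial into the PBW form $x^{\alpha} y^{\beta}$ using Proposition~\ref{proposition1NR}. Specifically, one starts with
\[
f(x,y)g(x,y) = \sum_{i=0}^{m}\sum_{j=0}^{n} e_i l_j\, x^{m-i} y^i x^{n-j} y^j,
\]
and the only nontrivial step is to rewrite each factor $y^i x^{n-j}$ in normal form before collecting powers of $x$ and $y$.

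First, I would split the indices $i$ and $j$ according to parity. The bounds $p$ and $q$ in the statement are chosen precisely so that $i = 2i'$ with $0 \le i' \le p$ enumerates the even values of $i$ in $\{0,\dots,m\}$, and $i = 2i'+1$ with $0 \le i' \le q$ enumerates the odd ones; the analogous split of $j$ is governed by the parity of $n$, giving upper bounds $n/2$, $n/2-1$ or $\lfloor n/2 \rfloor$ accordingly. This partitions the double sum into subsums indexed by the parity combinations of $(i,j)$.

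Next, I would apply Proposition~\ref{proposition1NR} to each subsum. By part~(1), whenever $i$ is even or $n-j$ is even the product $y^i x^{n-j}$ reduces to $x^{n-j} y^i$ and the subsum collapses after combining exponents of $x$. The only subsum that requires part~(2) is the one in which both $i$ and $n-j$ are odd; there the identity
\[
y^{2i'+1} x^{n-(2j'+1)} = a\, x^{n-2j'} y^{2i'} - x^{n-(2j'+1)} y^{2i'+1} + c\, x^{n-(2j'+2)} y^{2i'+2}
\]
introduces the three-term block. In Case~(1), $n$ is even, so $n-(2j'+1)$ is odd and the pair $(i,j)$ both odd is exactly what triggers the three-term expansion visible in the statement. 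In Case~(2), $n$ is odd, so the odd-odd pairing has $n-(2j'+1)$ even and commutes trivially, while the even-even pairing also commutes, which explains the simpler form of Case~(2).

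The main obstacle is purely bookkeeping: after applying the three-term expansion one must carefully shift summation indices and regroup terms of equal $y$-degree in order to recover the closed-form expressions displayed in the statement. No algebraic input beyond Proposition~\ref{proposition1NR} is required.
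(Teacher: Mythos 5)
Your skeleton — expand $f(x,y)g(x,y)$ as $\sum_{i=0}^{m}\sum_{j=0}^{n}e_il_j\,x^{m-i}y^ix^{n-j}y^j$, split by parity, and normalize $y^ix^{n-j}$ via Proposition \ref{proposition1NR} — is the same as the paper's, but the step you dismiss as "purely bookkeeping" is exactly where the argument fails. You split into all four parity classes of $(i,j)$ and then assert that index shifts regroup everything into the displayed formulas; they do not, because the mixed-parity classes contribute monomials that simply do not occur on the stated right-hand sides. Concretely, take $m=1$, $n=2$, $f=e_0x+e_1y$, $g=l_0x^2+l_1xy+l_2y^2$: the actual product is $e_0l_0x^3+(e_0l_1+e_1l_0+ae_1l_1)x^2y+(e_0l_2-e_1l_1)xy^2+(ce_1l_1+e_1l_2)y^3$, whereas Case (1) with $p=q=0$ yields only $e_0l_0x^3+e_0l_2xy^2+e_1l_1(ax^2y+xy^2+cy^3)$; the cross terms $e_0l_1x^2y$, $e_1l_0x^2y$, $e_1l_2y^3$ cannot be produced by any reindexing of the two displayed double sums. (For comparison, the paper's own proof jumps from the full double sum directly to the two matched-parity blocks $e_{2i}l_{2j}$ and $e_{2i+1}l_{2j+1}$, i.e.\ it discards these cross terms without comment; your more careful four-class decomposition, carried out honestly, exposes rather than closes this gap.)

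Two further concrete points. First, your parity analysis of Case (2) is wrong: when $n$ is odd, the class with $i$ odd and $j$ even has $n-j$ odd, so by your own criterion it is this class — not the odd–odd one — that triggers the three-term rule $y^{k}x=ax^{2}y^{k-1}-xy^{k}+cy^{k+1}$, while the even–odd class commutes but still produces monomials $x^{(m+n)-(2i+2j+1)}y^{2i+2j+1}$ absent from the stated formula; so the "simpler form" of Case (2) is not explained by your argument. Second, the middle term of the three-term block inherits the coefficient $-1$ from Proposition \ref{proposition1NR}(2), whereas matching the displayed Case (1) would require $+1$. As written, then, the proposal does not prove the statement: the final regrouping step would fail, and completing your computation correctly yields a formula containing the mixed-parity terms (and a minus sign) that the displayed identities omit.
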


\subsubsection{Case $Q(a,0,0)$}\label{a00}

It is clear that this algebra has a PBW basis of the form $\{x^my^n\mid m, n \in \mathbb{N}\}$, and from \cite{ChaconReyes2023}, we know that the relation $y^nx^k = a^nx^{n+k}$ holds for all $n \geq 0$ and $k \geq 1$. 

\begin{proposition}\label{proposition4NR}
If $f(x,y), g(x,y)\in Q(a,0,0)$ are two homogeneous elements given by the expressions 
\begin{equation}\label{fyg}
    f(x,y) = \sum_{i=0}^{m}e_{i}x^{m-i}y^{i}\ \ {\rm and}\ \ 
    g(x,y) = \sum_{j=0}^{n}l_{j}x^{n-j}y^{j},
\end{equation}

then
\begin{align*}
    f(x,y)g(x,y)= \sum_{i=0}^{m}\sum_{j=0}^{n} \left(e_il_ja^i\right) x^{m+n-j} y^j.
\end{align*}
\end{proposition}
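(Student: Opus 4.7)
The plan is a direct distribution of the product followed by a termwise application of the commutation rule $y^{i}x^{k}=a^{i}x^{i+k}$ recalled immediately before the statement. First I would expand, using bilinearity,
\[
f(x,y)g(x,y) \;=\; \sum_{i=0}^{m}\sum_{j=0}^{n} e_{i}\,l_{j}\, x^{m-i}\, y^{i}\, x^{n-j}\, y^{j}.
\]
Then, in each term, I would push the middle block $y^{i}x^{n-j}$ to the left using the commutation rule, replacing it by $a^{i}x^{i+n-j}$; merging the two adjacent powers of $x$ then collapses the monomial to $e_{i}l_{j}a^{i}x^{m+n-j}y^{j}$, which is exactly the summand on the right-hand side of the statement. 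Summing over all pairs $(i,j)$ yields the announced identity.

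The argument is essentially bookkeeping: no induction is required, and the total degree of every surviving monomial is automatically $m+n$, as expected for the product of two homogeneous elements of degrees $m$ and $n$. The only delicate point, and the main obstacle I would flag, is the boundary slice $j=n$, where $n-j=0$ and the commutation formula was only stated for $k\geq 1$. Here the naive rewriting produces $y^{i}\cdot 1 = y^{i}$ rather than $a^{i}x^{i}$, so I would either split the double sum into the pieces $j<n$ and $j=n$ and verify the $j=n$ contribution by hand, or, if the intended convention in the statement is that both pieces admit a uniform closed form, clarify this when writing up the proposition. Once this edge case is pinned down, the rest of the proof reduces to a routine rewriting of indices, and no further combinatorial device is needed.
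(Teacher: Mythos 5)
Your route is the same as the paper's: expand bilinearly and push each middle block $y^{i}x^{n-j}$ to the left via $y^{i}x^{k}=a^{i}x^{i+k}$. The edge case you flag, however, is not a formality, and the paper's own proof stumbles on it: it applies the rule uniformly, including when $k=n-j=0$, where it is false ($y^{i}x^{0}=y^{i}$, not $a^{i}x^{i}$). In fact the proposition's displayed formula is wrong on that slice. For $j=n$ the genuine contribution is $\sum_{i=0}^{m}e_{i}l_{n}\,x^{m-i}y^{i+n}$, whereas the stated right-hand side asserts it equals $\bigl(\sum_{i=0}^{m}e_{i}a^{i}\bigr)l_{n}\,x^{m}y^{n}$; already $f=g=y$ (so $m=n=1$, $e_{1}=l_{1}=1$) gives $fg=y^{2}$, while the formula predicts $axy$, and these are distinct elements of the PBW basis $\{x^{m}y^{n}\}$ of $Q(a,0,0)$.

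Carrying out the split you propose (sum over $j\le n-1$, then the slice $j=n$ by hand) gives the corrected identity
\[
f(x,y)\,g(x,y)=\sum_{i=0}^{m}\sum_{j=0}^{n-1}e_{i}l_{j}a^{i}\,x^{m+n-j}y^{j}+\sum_{i=0}^{m}e_{i}l_{n}\,x^{m-i}y^{i+n},
\]
which coincides with the statement only when $l_{n}=0$ or when the surviving $e_{i}$ have $i=0$. So your instinct is right, but the outcome is not a clarified convention: the proposition (and the corollary that follows it in the paper, which has the same defect at its $j=n$ term) needs to be amended to the formula above, or restricted by an explicit hypothesis such as $l_{n}=0$. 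Apart from settling this boundary slice, your bookkeeping is exactly the paper's argument and needs no further input.
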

\begin{proof}
Since $y^nx^k = a^nx^{n+k}$, for all $n \geq 0$ and $k \geq 1$, it follows that 
\begin{align*}
    f(x,y)g(x,y) = &\ \left(\sum_{i=0}^{m}e_{i}x^{m-i}y^{i}\right)\left(\sum_{j=0}^{n}l_{j}x^{n-j}y^{j}\right) = \sum_{i=0}^{m}\sum_{j=0}^{n} e_il_j x^{m-i}y^ix^{n-j}y^j\\
    = &\ \sum_{i=0}^{m}\sum_{j=0}^{n} e_il_j x^{m-i}\left(a^ix^{n-j+i}\right)y^j\\
    = &\ \sum_{i=0}^{m}\sum_{j=0}^{n} \left(e_il_ja^i\right) x^{m+n-j} y^j.
\end{align*}
\end{proof}

\begin{corollary}
For the element $g(x,y)$ given by the expression (\ref{fyg}), we have 
    \begin{equation*}
        y^{n}g(x,y)= \sum_{j=0} l_ja^n x^{2n-j} y^j.
    \end{equation*}
\end{corollary}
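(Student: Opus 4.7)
The plan is to deduce this corollary as an immediate specialization of Proposition~\ref{proposition4NR}. The key observation is that $y^{n}$ is itself a homogeneous element of degree $n$ in $Q(a,0,0)$ and can be written in the standard form $\sum_{i=0}^{n}e_{i}x^{n-i}y^{i}$ by setting $e_{n}=1$ and $e_{i}=0$ for every $i<n$. Thus the corollary amounts to reading off what Proposition~\ref{proposition4NR} says about the product $f(x,y)g(x,y)$ when the first factor is chosen as this particular $f(x,y)=y^{n}$ (with $m=n$).

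Carrying out this specialization, the double sum in Proposition~\ref{proposition4NR} collapses to a single sum because only the index $i=n$ contributes. Explicitly, one substitutes $m=n$ and $e_{n}=1$ into
\[
f(x,y)g(x,y) = \sum_{i=0}^{m}\sum_{j=0}^{n}\bigl(e_{i}l_{j}a^{i}\bigr)\,x^{m+n-j}y^{j},
\]
and all terms with $i<n$ vanish. What remains is precisely $\sum_{j}l_{j}a^{n}x^{2n-j}y^{j}$, which is the asserted identity.

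Since this is a direct corollary, no real obstacle arises; the only bookkeeping to check is the exponent count $m+n-j=2n-j$ and the identification of the coefficient $e_{n}a^{n}=a^{n}$. If one prefers a self-contained derivation not routed through Proposition~\ref{proposition4NR}, the same identity follows in one line from the commutation rule $y^{n}x^{k}=a^{n}x^{n+k}$ recalled immediately before that proposition: namely, distribute $y^{n}$ across $g(x,y)=\sum_{j}l_{j}x^{n-j}y^{j}$ and apply this rule inside each summand. Either route yields the stated formula with the same amount of effort.
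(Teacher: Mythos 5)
Your proof is correct. The paper itself proves the corollary by the second route you mention: it distributes $y^{n}$ over $g(x,y)$ and applies the rule $y^{n}x^{k}=a^{n}x^{n+k}$ termwise, giving $y^{n}g=\sum_{j}l_{j}y^{n}x^{n-j}y^{j}=\sum_{j}l_{j}a^{n}x^{2n-j}y^{j}$ in one line. Your primary route instead specializes Proposition~\ref{proposition4NR} to $f(x,y)=y^{n}$ (that is, $m=n$, $e_{n}=1$, $e_{i}=0$ for $i<n$), which is equally valid and has the small advantage of reusing the already-established product formula rather than recomputing; since that proposition was itself proved from the same commutation rule, the two arguments are really the same calculation packaged differently. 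The only caveat, shared with the paper's own proof and with Proposition~\ref{proposition4NR}, is that the rule $y^{n}x^{k}=a^{n}x^{n+k}$ is quoted only for $k\geq 1$, so the term with $j=n$ (where $x^{n-j}=1$) strictly requires separate treatment; this is an imprecision inherited from the paper, not a gap introduced by your argument.
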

\begin{proof}
The assertion follows from the equalities
\[
y^{n}g(x,y) = y^n \sum_{j=0}^{n}l_{j}x^{n-j}y^{j} = \sum_{j=0}^{n}l_{j}y^n x^{n-j}y^{j} = \sum_{j=0}^{n}l_{j}a^n x^{2n-j}y^{j}.
\]
\end{proof}

\subsubsection{Case $Q(0,b,0)$}\label{0b0}

The algebra $Q(0,b,0)$ is known in the literature as the {\em Manin's plane} or the {\em quantum plane}. Algebraic descriptions of the centralizer of elements belonging to this algebra can be found in Artamanov and Cohn's paper \cite{ArtamanovCohn1999}. It is straightforward to see that this algebra satisfies the relation $y^nx^k = b^{kn}x^ky^n$ holds for all $n,k \geq 0$. Proposition \ref{proposition5NR} extends this relationship by considering the product of two polynomials belonging to this algebra.

\begin{proposition}\label{proposition5NR}
If $f(x,y), g(x,y)\in Q(0,b,0)$ are two homogeneous elements given by
\begin{equation*}
    f(x,y) = \sum_{i=0}^{m}e_{i}x^{m-i}y^{i}\ \ {\rm and}\ \ g(x,y) = \sum_{j=0}^{n}l_{j}x^{n-j}y^{j},
\end{equation*}

then
\begin{align*}
    f(x,y)g(x,y)= \sum_{i=0}^{m}\sum_{j=0}^{n} \left(e_il_j b^{in-ij}\right) x^{(m+n)-(i+j)} y^{i+j}.
\end{align*}
\end{proposition}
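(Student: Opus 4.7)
The plan is to expand the product $f(x,y)g(x,y)$ as a double sum over the PBW basis and then push each occurrence of $y^i$ past $x^{n-j}$ using the commutation rule $y^nx^k = b^{kn}x^ky^n$ already established for $Q(0,b,0)$. This rule is the analogue, in this simpler case, of the more involved identities proved in Propositions \ref{proposition1NR}--\ref{proposition3NR} for $Q(a,-1,c)$, and it makes the computation essentially mechanical.

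Concretely, I would first write
\[
f(x,y)g(x,y) \;=\; \sum_{i=0}^{m}\sum_{j=0}^{n} e_i l_j\, x^{m-i}y^i x^{n-j}y^j.
\]
Then I would apply the stated commutation relation in the form $y^i x^{n-j} = b^{i(n-j)} x^{n-j} y^i$, which is just the relation $y^nx^k=b^{kn}x^ky^n$ with $n\mapsto i$ and $k\mapsto n-j$. Substituting this inside the double sum and collecting the powers of $x$ on the left and of $y$ on the right gives
\[
\sum_{i=0}^{m}\sum_{j=0}^{n} e_i l_j\, b^{i(n-j)}\, x^{(m+n)-(i+j)}\, y^{i+j},
\]
which is exactly the claimed formula after rewriting $i(n-j)=in-ij$.

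There is no serious obstacle here; the only point worth noting is the boundary case $n-j=0$, where the identity $y^i x^0 = b^0 x^0 y^i = y^i$ holds trivially, so the formula $y^i x^{n-j} = b^{i(n-j)} x^{n-j} y^i$ remains valid for all indices in the range of summation. Since the argument is a direct manipulation once the commutation rule is in hand, I expect the proof in the paper to be a short three- or four-line computation essentially identical to the one just sketched.
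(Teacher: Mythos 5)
Your proposal is correct and follows essentially the same route as the paper: expand the double sum, apply the relation $y^{i}x^{n-j}=b^{i(n-j)}x^{n-j}y^{i}$, and collect powers of $x$ and $y$. The paper's proof is exactly this three-line computation, so there is nothing to add.
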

\begin{proof}
Since $y^nx^k = b^{kn}x^ky^n$, for all $n,k \geq 0$, it follows that 
\begin{align*}
    f(x,y)g(x,y) &= \left(\sum_{i=0}^{m}e_{i}x^{m-i}y^{i}\right)\left(\sum_{j=0}^{n}l_{j}x^{n-j}y^{j}\right) = \sum_{i=0}^{m}\sum_{j=0}^{n} e_il_j x^{m-i}y^ix^{n-j}y^j\\
    &= \sum_{i=0}^{m}\sum_{j=0}^{n} e_il_j x^{m-i} b^{(i)(n-j)}x^{n-j}y^iy^j\\
    &= \sum_{i=0}^{m}\sum_{j=0}^{n} \left(e_il_j b^{in-ij}\right) x^{(m+n)-(i+j)} y^{i+j}.
\end{align*}
\end{proof}

\begin{corollary}
The following rule of commutation holds for any value of $n$, $m$ and $i$.
    \begin{align*}
        x^{m-i}y^{i}\left(\sum_{j=0}^{n} e_{j}x^{n-j}y^{j} \right)= \sum_{j=0}^{n} e_{j}b^{i(n-j)}x^{m+n-(i+j)}y^{i+j}.
    \end{align*}
\end{corollary}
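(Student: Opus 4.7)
The statement is the specialization of Proposition \ref{proposition5NR} obtained by taking the left-hand factor to be the single homogeneous monomial $f(x,y)=x^{m-i}y^{i}$, that is, a degree-$m$ homogeneous polynomial whose only nonzero coefficient is the one of $x^{m-i}y^{i}$. Substituting this into the formula of Proposition \ref{proposition5NR} with the outer index fixed and the inner coefficients relabeled $l_{j}\mapsto e_{j}$ produces exactly the right-hand side of the corollary.

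Alternatively, the plan is to give a direct three-step derivation from the defining commutation rule $y^{i}x^{k}=b^{ki}x^{k}y^{i}$ that was recorded immediately before Proposition \ref{proposition5NR}. First, distribute $x^{m-i}y^{i}$ through the finite sum so that each summand has the shape $e_{j}\,x^{m-i}\bigl(y^{i}x^{n-j}\bigr)y^{j}$. Second, apply the quantum-plane commutation rule inside each summand, which turns $y^{i}x^{n-j}$ into $b^{i(n-j)}x^{n-j}y^{i}$ and pulls the scalar $b^{i(n-j)}$ outside. Third, consolidate the powers using $x^{m-i}\cdot x^{n-j}=x^{(m+n)-(i+j)}$ and $y^{i}\cdot y^{j}=y^{i+j}$ to obtain the claimed closed form.

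Neither route presents any genuine obstacle; the only place an error could creep in is the bookkeeping of the exponent $i(n-j)$ of $b$, which must match the product of the exponent of $y$ in $y^{i}$ with the exponent of $x$ in $x^{n-j}$ being crossed past it. Since the corollary is an immediate instance of a rule already used in the proof of Proposition \ref{proposition5NR}, the write-up is essentially a handful of lines and requires no induction, case analysis, or auxiliary lemma.
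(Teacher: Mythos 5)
Your proposal is correct: the corollary is indeed just Proposition \ref{proposition5NR} specialized to the single monomial $f(x,y)=x^{m-i}y^{i}$ (equivalently, a one-line application of the relation $y^{i}x^{k}=b^{ki}x^{k}y^{i}$), and the exponent bookkeeping $b^{i(n-j)}$ checks out. This matches the paper, which states the corollary without further proof precisely because it follows immediately from the proposition and the quantum-plane commutation rule.
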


In the next section we introduce the notions of Sylvester matrix and resultant for quadratic algebras for the algebras considered above.

\section{Sylvester matrix and Resultants}\label{SMR}

In this section, we follow the ideas used in the commutative case to define the Sylvester matrix associated with a pair homogeneous polynomials in several variables \cite{Morozov2010}. We also explore whether this notion determines the existence of common right factors in the context of the algebras of our interest.

\subsection{Sylvester matrix for $Q(a,b,c)$}\label{Sylvestermatrix}

From the properties obtained in Section \ref{productsofelements}, we can see that the product of two homogeneous polynomials is a homogeneous polynomial. In this way, if we define the sets 
\[
H_{n}=\left\{p(x,y)\in Q(a,b,c)\mid p(x,y) \;\,\text{is homogeneous of total degree}\;\, n\right\},
\]

then it is straightforward to see that $H_{n}H_{m}\subseteq H_{n+m}$. Now, since $Q(a,b,c)=\bigoplus\limits_{n\in \mathbb{N}} H_{n}$, it is clear that $\{H_{n}\}_{n\in\mathbb{N}}$ is a graduation for $Q(a,b,c)$.

Related with this, an important fact to formulate a definition of resultant appears when we consider the homomorphism of left $\Bbbk$-modules
\begin{align}\label{homo}
        \phi:H_{n-1}\times H_{m-1}&\longrightarrow H_{m+n-1} \notag \\
        (c(x,y),d(x,y))&\longmapsto c(x,y)f(x,y)+d(x,y)g(x,y),
\end{align}

which is well defined due to the fact that the set $\{x^{i}y^{j}\}_{i,j\geq 0}$ is a PBW basis and the graduation above.

Given two polynomials $f(x, y)$ and $g(x, y)$ in the quadratic algebra $Q(a, b, c)$, the question about the existence of two homogeneous polynomials $c(x,y)$ and $d(x,y)$ such that
\begin{equation}\label{defining_equation}
    c(x, y)f(x, y) + d(x, y)g(x, y)=0,
\end{equation}

can be formulated on the characterization of the kernel of the homomorphism $\phi$ in expression (\ref{homo}). Having in mind that the set $\{x^{n}y^{m}\mid n,m \in \mathbb{N}\}$ is a PBW basis for $Q(a,b,c)$, it follows that $\{x^{i}y^{j}\mid i+j=n\}$ is a basis for the $\Bbbk$-module $H_{n}$. Hence, we can think about defining the matrix that represents the homomorphism $\phi$, and for this we will consider a fixed monomial order on the PBW basis: the lexicographic monomial ordering. In this way, the following is the ordered set that we consider for the PBW basis
{\small{
\begin{align}\label{basis}
    \mathcal{B} = \left\{(x^{n},0), (x^{n-1}y,0),(x^{n-2}y^{2},0),\ldots, (y^{n},0),(0,x^{m}),(0,x^{m-1}y),\ldots,(0,y^{m})\right \}
\end{align}
}}

Now, we define the \textit{Sylvester matrix} for homogeneous elements in quadratic algebras.

\begin{definition}\label{Sylvestermatrix}
    Let $f(x,y)$ and $g(x,y)$ be two homogeneous polynomials in $Q(a,b,c)$ with degree $m$ and $n$, respectively. The \textit{Sylvester matrix} of $f(x,y)$ and $g(x,y)$ is the matrix that represents the homomorphism (\ref{homo}) in the basis given by the set (\ref{basis}). This matrix will be denoted by ${\rm Syl}_{Q(a,b,c)}(f,g)$ and has size $(m+n) \times (m+n)$. The determinant of this matrix will be called \textit{the resultant} of $f(x,y)$ and $g(x,y)$, and it will be denoted by ${\rm Res}_{Q(a,b,c)}$.
\end{definition}

The Sylvester matrix for the algebras of our interest has one of the forms which are presented below. Note that all the formulas deduced in Section \ref{productsofelements} are used to construct these matrices. Let us describe how the Sylvester matrix is constructed: at position $i, j$ ($i$th row and $j$th column), the entry corresponds to the coefficient of the monomial $x^{m+n-i}y^{i-1}$ of the polynomial $x^{n-j}y^{j-1}f$, for the first $n$ columns, and in the last columns, the coefficient of the same monomial but of the polynomial $x^{m-k}y^{k-1}g$. Since quadratic algebras we are considering have PBW basis, the function $\gamma_{i,j}: Q(a,b,c) \to \Bbbk,\ p(x,y) \to c_{i,j}$, where $c_{i,j}$ is the coefficient of the monomial $x^{i}y^{j}$ in the expansion of $p(x,y)$ in terms of the basis $\{x^{i}y^{j}\mid i,j \geq 0\}$, is well defined. With this notation, we can describe the Sylvester matrix ${\rm Syl}_{Q(a,b,c)}(f,g)$ as
{\footnotesize
\begin{align*}
\begin{bmatrix}
        \gamma_{m+n-1,0}(x^{n-1}f)  & \dotsc & \gamma_{m+n-1,0}(y^{n-1}f)& \gamma_{m+n-1,0}(x^{m-1}g) & \dotsc & \gamma_{m+n-1,0}(y^{m-1}g) \\
        \gamma_{m+n-2,1}(x^{n-1}f)  & \dotsc & \gamma_{m+n-2,1}(y^{n-1}f)& \gamma_{m+n-2,1}(x^{m-1}g) & \dotsc & \gamma_{m+n-2,1}(y^{m-1}g)\\
        \gamma_{m+n-3,2}(x^{n-1}f)  & \dotsc & \gamma_{m+n-3,2}(y^{n-1}f)& \gamma_{m+n-3,2}(x^{m-1}g) & \dotsc & \gamma_{m+n-3,2}(y^{m-1}g) \\
        \vdots  & \vdots & & \vdots &  & \vdots\\
        \gamma_{n+1,m-2}(x^{n-1}f) & \dotsc & \gamma_{n+1,m-2}(y^{n-1}f)& \gamma_{n+1,m-2}(x^{m-1}g) & \dotsc & \gamma_{n+1,m-2}(y^{m-1}g)\\
        \gamma_{n,m-1}(x^{n-1}f) & \dotsc & \gamma_{n,m-1}(y^{n-1}f)& \gamma_{n,m-1}(x^{m-1}g) & \dotsc & \gamma_{n,m-1}(y^{m-1}g)\\ 
        \vdots &  \vdots & & \vdots &  & \vdots\\
        \gamma_{0,m+n-1}(x^{n-1}f)  & \dotsc & \gamma_{n,m+n-1}(y^{n-1}f)& \gamma_{n,m+n-1}(x^{m-1}g) & \dotsc & \gamma_{n,m+n-1}(y^{m-1}g)
    \end{bmatrix}.
\end{align*}
}

Let us see some illustrative examples of Definition \ref{Sylvestermatrix}.

\subsubsection{Case $Q(a,-1,c)$}
The Sylvester matrix depends on the parity or oddness of the degrees of the elements $f(x,y) = \sum\limits_{i=0}^{m}e_{i}x^{m-i}y^{i}$ and $g(x,y) = \sum\limits_{j=0}^{n}l_{j}x^{n-j}y^{j}$. The corresponding matrices are shown in (\ref{matrix1of4}), (\ref{matrix2of4}), (\ref{matrix3of4}), and (\ref{matrix4of4}).

\begin{example}
Let $f(x,y)=x^{2}+y^{2}$ and $g(x,y)=xy$ be elements of $Q(a,-1,c)$. Then $c(x,y) = c_{1}x+c_{2}y$ and $d(x,y) = d_{1}x+d_{2}y$, whence $cf = c_{1}x^{3}+c_{2}x^{2}y+c_{1}xy^{2}+c_{2}y^{3}$ and $dg = (d_{1}+d_{2}a)x^{2}y-d_{2}xy^{2}+cd_{2}y^{3}$, and the Sylvester matrix of $f$ and $g$ is given by
\begin{equation*}
    {\rm Syl}_{Q(a,-1,c)}(f,g)=\begin{bmatrix}
    1 & 0 & 0 & \;\;\,0 \\
    0 & 1 & 0 & \;\;\,a \\
    1 & 0 & 0 & -1\\
    0 & 1 & 0 & \;\;\,c
    \end{bmatrix}.
\end{equation*}
\end{example}

\begin{landscape}
\begin{enumerate}
    \item [\rm (1)] If $m$ is even and $n$ is even, then the matrix has the following form:
    {\scriptsize
    \begin{equation}\label{matrix1of4}
    {\rm Syl}_{Q(a,-1,c)}(f,g)=\begin{bmatrix}
    e_{0} & 0 & 0 & \dotsc & 0  & l_{0} & 0 & \dotsc &0\\
    e_{1} & e_{0}+ae_{1} & 0  & \dotsc & 0  & l_{1} & l_{0}+al_{1} & \dotsc & 0\\
    e_{2} & -e_{1} & e_{0}  & \dotsc & 0  & l_{2} & -l_{1} & \dotsc & 0\\
    e_{3} & ce_1+e_2+ae_{3} & e_{1}  & \dotsc & 0  & l_{3} & cl_{1}+l_{2}+al_{3} & \dotsc & 0\\
    e_{4} & -e_3 & e_{4} &\dotsc & 0  & l_{4} & -l_{3} & \dotsc & 0\\
    e_5 & ce_{3}+e_{4}+ae_{5} & e_{5}  & \dotsc & 0 & l_{5} & cl_{3}+l_{4}+al_{5} & \dotsc & 0\\
    \vdots & \vdots & \vdots  & \dotsc & \vdots  & \vdots &\vdots & \vdots & \vdots  \\
    e_{m-2} & -e_{m-3} & e_{m-4}  & \dotsc & 0  & l_{n-3} & cl_{n-5}+l_{n-4}+al_{n-3} & \dotsc &0\\
    e_{m-1} & ce_{m-3}+e_{m-2}+ae_{m-1} & e_{m-3}  & \dotsc & 0 & l_{n-2} & -l_{n-3} & \dotsc & 0\\
    e_{m} & -e_{m-1} & e_{m-2}  & \dotsc & 0 & l_{n-1} & cl_{n-3}+l_{n-2}+al_{n-1} & \dotsc & 0\\
    0 &  ce_{m-1}+e_{m} & e_{m-1}  & \dotsc & e_{0}  & l_{n} & -l_{n-1} & \dotsc & l_{0}\\
    0 & 0 & e_{m}  & \dotsc & e_{1}  & 0& cl_{n-1}+l_{n} & \dotsc & l_{n}\\
    \vdots & \vdots & \vdots  & \dotsc & \vdots  & \vdots & \vdots & \vdots & \vdots\\
    0 & 0 & 0  & \dotsc & e_{m-2} & 0 & 0 & \dotsc &l_{n-2}\\
    0 & 0 & 0  & \dotsc & e_{m-1}  & 0 & 0 & \dotsc & l_{n-1}\\
    0 & 0 & 0  &\dotsc & e_{m}  & 0 & 0 & \dotsc & l_{n}
    \end{bmatrix}.
\end{equation}
}
    \item [\rm (2)] If $m$ is even and $n$ is odd, then the matrix has the following form:
{\scriptsize
    \begin{equation}\label{matrix2of4}
    {\rm Syl}_{Q(a,-1,c)}(f,g)=\begin{bmatrix}
    e_{0} & 0 & 0  & \dotsc & 0  & l_{0} & al_{0} & \dotsc &0\\
    e_{1} & e_{0}+ae_{1} & 0  & \dotsc & 0  & l_{1} & -l_{0} & \dotsc & 0\\
    e_{2} & -e_{1} & e_{0}  & \dotsc & 0  & l_{2} & cl_{0}+l_{1}+al_{2} & \dotsc & 0\\
    e_{3} & ce_1+e_2+ae_{3} & e_{1}  & \dotsc & 0  & l_{3} & -l_{2} & \dotsc & 0\\
    e_{4} & -e_3 & e_{4} &\dotsc & 0  & l_{4} & cl_{2}+l_{3}+al_{4} & \dotsc & 0\\
    e_5 & ce_{3}+e_{4}+ae_{5} & e_{5}  & \dotsc & 0  & l_{5} & -l_{4} & \dotsc & 0\\
    \vdots & \vdots & \vdots & \vdots & \dotsc & \vdots  & \vdots &\vdots & \vdots  \\
    e_{m-2} & -e_{m-3} & e_{m-4}  & \dotsc & 0 & l_{n-3} & cl_{n-5}+l_{n-4}+al_{n-3} & \dotsc &0\\
    e_{m-1} & ce_{m-3}+e_{m-2}+ae_{m-1} & e_{m-3}  & \dotsc & 0  & l_{n-2} & -l_{n-3} & \dotsc & 0\\
    e_{m} & -e_{m-1} & e_{m-2}  & \dotsc & 0 & l_{n-1} & cl_{n-3}+l_{n-2}+al_{n-1} & \dotsc & 0\\
    0 &  ce_{m-1}+e_{m} & e_{m-1}  & \dotsc & e_{0}  & l_{n} & -l_{n-1} & \dotsc & l_{0}\\
    0 & 0 & e_{m}  & \dotsc & e_{1} & 0& cl_{n-1}+l_{n} & \dotsc & l_{n}\\
    \vdots & \vdots & \vdots & \vdots & \dotsc & \vdots  & \vdots & \vdots & \vdots\\
    0 & 0 & 0  & \dotsc & e_{m-2}  & 0 & 0 & \dotsc &l_{n-2}\\
    0 & 0 & 0  & \dotsc & e_{m-1} & 0 & 0 & \dotsc & l_{n-1}\\
    0 & 0 & 0  &\dotsc & e_{m}  & 0 & 0 & \dotsc & l_{n}
    \end{bmatrix}.
\end{equation}
}
\end{enumerate}
\end{landscape}

\begin{landscape}
\begin{enumerate}
    \item [\rm (3)] If $m$ is odd and $n$ is even, then the matrix has the following form:
{\tiny
    \begin{equation}\label{matrix3of4}
    {\rm Syl}_{Q(a,-1,c)}(f,g)=\begin{bmatrix}
    e_{0} & ae_0 & 0 & 0 & \dotsc & 0  & l_{0} & 0 & \dotsc & 0\\
    e_{1} & -e_{0} & 0 & 0 & \dotsc & 0  & l_{1} & l_{0} + al_{1} & \dotsc & 0\\
    e_{2} & ce_{0} + e_{1} + ae_{2} & e_{0} & ae_{0} & \dotsc & 0 & l_{2} & -l_{1} & \dotsc & 0\\
    e_{3} & -e_2 & e_{1} & -e_{0} & \dotsc & 0 & l_{3} & cl_{1} + l_{2} +al_3 & \dotsc & 0\\
    e_{4} & ce_2 + e_3 + ae_4 & e_{2}& ce_{0} + e_1 + ae_2 &\dotsc & 0 & l_{4} & -l_{3} & \dotsc & 0\\
    e_5 & - e_{4} & e_{3} & -e_{2} & \dotsc & 0 & l_{5} & cl_3 +l_{4} +al_5 & \dotsc & 0\\
    \vdots & \vdots & \vdots & \vdots & \dotsc & \vdots & \vdots &\vdots & \dotsc & \vdots  \\
    e_{m-2} & -e_{m-3} & e_{m-4} & -ce_{m-5} & \dotsc & ae_0  & l_{n-3} & cl_{m-5}+l_{m-4}+al_{m-3} & \dotsc & 0\\
    e_{m-1} & ce_{m-3} +e_{m-2}+ae_{m-1} & e_{m-3} & ce_{m-5} + e_{m-4} +ae_{m-3} & \dotsc & -e_0 & l_{n-2} & -l_{n-3} & \dotsc & l_0\\
    e_{m} & -e_{m-1} & e_{m-2} & -e_{m-3} & \dotsc & ce_0 + e_1 + ae_2 & l_{n-1} & cl_{n-3} + l_{n-2} + al_{n-1} & \dotsc & l_1\\
    0 &  ce_{m-1}+e_{m} & e_{m-1} & ce_{m-3} +e_{m-2} + ae_{m-1} & \dotsc & -e_2 & l_{n} & -l_{n-1} & \dotsc & l_{2}\\
    0 & 0 & e_{m} & -e_{m-1} & \dotsc & ce_{2} + e_3 + ae_4 & 0& cl_{n-1} +l_n & \dotsc & l_{3}\\
    \vdots & \vdots & \vdots & \vdots & \dotsc & \vdots & \vdots & \vdots & \dotsc & \vdots\\
    0 & 0 & 0 & 0 & \dotsc & ce_{m-3}+e_{m-2}+ae_{m-1} & 0 & 0 & \dotsc & l_{n-2}\\
    0 & 0 & 0 & 0 & \dotsc & -e_{m-1} & 0 & 0&  \dotsc &l_{n-1}\\
    0 & 0 & 0 & 0 &\dotsc &  ce_{m-1} + e_{m} & 0 & 0 & \dotsc & l_{n}
    \end{bmatrix}.
\end{equation}
}
    \item [\rm (4)] If $m$ is odd and $n$ is odd, then the matrix has the following form:
{\tiny
    \begin{equation}\label{matrix4of4}
   {\rm Syl}_{Q(a,-1,c)}(f,g)=\begin{bmatrix}
    e_{0} & ae_0 & 0 & 0 & \dotsc & 0  & l_{0} & al_0 & \dotsc & 0\\
    e_{1} & -e_{0} & 0 & 0 & \dotsc & 0  & l_{1} & -l_{0}  & \dotsc & 0\\
    e_{2} & ce_{0} + e_{1} + ae_{2} & e_{0} & ae_{0} & \dotsc & 0 & l_{2} & cl_0 + l_1 + al_2 & \dotsc & 0\\
    e_{3} & -e_2 & e_{1} & -e_{0} & \dotsc & 0 & l_{3} & -l_2 & \dotsc & 0\\
    e_{4} & ce_2 + e_3 + ae_4 & e_{2}& ce_{0} + e_1 + ae_2 &\dotsc & 0 & l_{4} & cl_2 +l_3 + al_4 & \dotsc & 0\\
    e_5 & - e_{4} & e_{3} & -e_{2} & \dotsc & 0 & l_{5} & -l_4 & \dotsc & 0\\
    \vdots & \vdots & \vdots & \vdots & \dotsc & \vdots & \vdots &\vdots & \dotsc & \vdots  \\
    e_{m-2} & -e_{m-3} & e_{m-4} & -ce_{m-5} & \dotsc & ae_0  & l_{n-3} & -l_{m-3} & \dotsc & 0\\
    e_{m-1} & ce_{m-3} +e_{m-2}+ae_{m-1} & e_{m-3} & ce_{m-5} + e_{m-4} +ae_{m-3} & \dotsc & -e_0 & l_{n-2} & cl_{n-3} + l_{n-2} + al_{n-1} & \dotsc & l_0\\
    e_{m} & -e_{m-1} & e_{m-2} & -e_{m-3} & \dotsc & ce_0 + e_1 + ae_2 & l_{n-1} & -l_{n-1} & \dotsc & l_1\\
    0 &  ce_{m-1}+e_{m} & e_{m-1} & ce_{m-3} +e_{m-2} + ae_{m-1} & \dotsc & -e_2 & l_{n} & cl_{n-1} + l_m & \dotsc & l_{2}\\
    0 & 0 & e_{m} & -e_{m-1} & \dotsc & ce_{2} + e_3 + ae_4 & 0& 0 & \dotsc & l_{3}\\
    \vdots & \vdots & \vdots & \vdots & \dotsc & \vdots & \vdots & \vdots & \dotsc & \vdots\\
    0 & 0 & 0 & 0 & \dotsc & ce_{m-3}+e_{m-2}+ae_{m-1} & 0 & 0 & \dotsc & l_{n-2}\\
    0 & 0 & 0 & 0 & \dotsc & -e_{m-1} & 0 & 0&  \dotsc &l_{n-1}\\
    0 & 0 & 0 & 0 &\cdots &  ce_{m-1} + e_{m} & 0 & 0 & \cdots & l_{n}
    \end{bmatrix}.
\end{equation}
}
\end{enumerate}
\end{landscape}

\begin{example}
Let $f(x,y) = x^{3}+y^{3}, g(x,y) = x^2y+xy^{2}\in Q(a,-1,c)$. Then $c(x,y) = c_{1}x^{2}+c_{2}xy+c_{3}y^{2}$ and $d(x,y) = d_{1}x^{2}+d_{2}xy+d_{3}y^{2}$. In this way,
\begin{align*}
    cf = &\ (c_{1}+ac_{2})x^{5}-c_{2}x^{4}y+(c_{3}-c)x^{3}y^{2}+c_{1}x^{2}y^{3}+c_{2}xy^{4}+c_{3}y^{5},\ \ {\rm and}\\
    dg = &\ d_{1}x^{4}y+(d_{1}+d_{2}+ad_{2})x^{3}y^{2}+(d_{3}-d_{2})x^{2}y^{3}+(d_{3}+d_{2}c)xy^{4}.
\end{align*}

The Sylvester resultant matrix of $f$ and $g$ is given by
\begin{equation*}
    {\rm Syl}_{Q(a,-1,c)}(f,g)=\begin{bmatrix}
    1 & a &0 & 0 & 0 & 0 \\
    0 & -1 &0 & 1 & 0 & 0 \\
    0 & -c &1 & 1 & 1 & a \\
    1 & 0 &0 & 1 & -1 & 0 \\
    0 & 1 &0 & 0 & c & 1 \\
    0 & 0 &1 & 0 & 0 & 0 \\
    \end{bmatrix}.
\end{equation*}
\end{example}

\subsubsection{Case $Q(a,0,0)$}
The Sylvester matrix ${\rm Syl}_{Q(a,0,0)}(f,g)$ is given by
{\normalsize{
\begin{align*}
\begin{bmatrix}
        e_{0} & ae_{0} & a^{2}e_{0}  & \hdots & a^{n-1}e_{0} & l_{0} & al_{0} & a^{2}l_{0}& \hdots & a^{m-1}l_{0}\\
        e_{1} & ae_{1} & a^{2}e_{1} &  & a^{n-1}e_{1}& l_{1} & al_{1} & a^{2}l_{1} & & a^{m-1}l_{1} \\
        e_{2} & ae_{2} & a^{2}e_{2}  & &a^{n-1}e_{2}&l_{2} & al_{2} & a^{2}l_{2}& & a^{m-1}l_{2}\\
        e_{3} & ae_{3} & a^{2}e_{3} & & a^{n-1}e_{3} & l_{3} & al_{3} & a^{2}l_{3}&\hdots & a^{m-1}l_{3}\\
        \vdots & \vdots & \vdots & \vdots & \cdots 
     & \vdots & \vdots & \vdots & \vdots & \vdots\\
        e_{m-2} & ae_{m-2} & a^{2}e_{m-2} & & a^{n-1}e_{m-2}& l_{n-3} & al_{n-3} & a^{2}l_{n-3} & & a^{m-1}l_{n-3}\\
        e_{m-1} & ae_{m-1} & a^{2}e_{m-1}& & a^{n-1}e_{m-1} & l_{n-2} & al_{n-2} & a^{2}l_{n-2} & &a^{m-1}l_{n-2}\\
        e_{m}& 0 & 0  & & 0 & l_{n-1} & al_{n-1} & a^{2}l_{n-1} &  &a^{m-1} l_{n-1}\\
        0 & e_{m} & 0& & 0 & l_{n} & 0 &0  & & 0\\
        0 & 0 & e_{m} &  & 0 & 0 & l_{n} & 0 & & 0\\
        0 & 0 & 0  &  & 0 & 0 & 0 & l_{n} & & 0\\
        \vdots  & \vdots & \vdots & \vdots & \cdots &\vdots& \vdots & \vdots & \vdots &  \\
        0 & 0 & 0  & \hdots & e_{m} & 0 & 0 & 0 & \hdots& l_{n}
    \end{bmatrix}.
\end{align*}
}}

\subsubsection{Case $Q(0,b,0)$}
The Sylvester matrix ${\rm Syl}_{Q(0,b,0)}(f,g)$ is given by
{\normalsize{
\begin{equation}\label{matrix_quantum_plane}
    \begin{bmatrix}
        e_{0} & 0& 0  & \cdots & l_{0} & \cdots &  0 & 0\\
        e_{1} & b^{m}e_{0} & 0  &  & l_{1} & & 0& 0 \\
        e_{2} & b^{m-1}e_{1} & b^{2m}e_{0}  & & l_{2}& &0 & 0\\
        e_{3} & b^{m-2}e_{2} & b^{2(m-1)}e_{1} & & l_{3} & &0 & 0\\
        \vdots & \vdots & \vdots  & \hdots& \vdots & \cdots &\vdots& \vdots\\
        e_{m-2} & b^{3}e_{m-3} & b^{8}e_{m-4} & & l_{n-1} & & 0 & 0\\
        e_{m-1} & b^{2}e_{m-2} & b^{15}e_{m-3}& & l_{n} & &0& 0\\
        e_{m}& be_{m-1} & b^{4}e_{m-2} & & 0 & & l_{0}b^{n(m-2)} &0\\
        0 & e_{m} & b^{2}e_{m-1}& & 0 & &l_{1}b^{(n-1)(m-2)}&  l_{0}b^{(n)(m-1)}\\
        0 & 0 & e_{m}& & 0 & & l_{2}b^{(n-2)(m-2)} & l_{1}b^{(n-1)(m-1)}\\
        0 & 0 & 0  & & 0 & & l_{3}b^{(n-3)(m-2)} & l_{2}b^{(n-2)(m-1)} \\
        \vdots    & \vdots & \vdots  & \cdots & \vdots & \cdots &\vdots & \vdots \\
        0 & 0 & 0 & & 0 &  & l_{n-1}b^{2(m-2)} & l_{n-2}b^{2(m-1)}\\
        0 & 0 & 0 &  & 0 & & l_{n}b^{(m-2)} & l_{n-1}b^{(m-1)}\\
        0 & 0 & 0 & \cdots & 0 & \cdots & 0 & l_{n} 
    \end{bmatrix}.
\end{equation}
}}

\subsection{Resultants and right common factors}\label{commonfactors}

It is well-known that the concept of {\em resultant} can be introduced by asking when two polynomials in the commutative polynomial ring $\Bbbk[x]$ have a common factor. Two important applications of resultant theory are elimination theory, and the proofs of {\em Extension theorem} and {\em Bezout's theorem} (see Cox et al. \cite[Chapter 3]{Coxetal2015} for more details). In the commutative and some noncommutative cases, it has been proven that the existence of common factors is equivalent to the existence of polynomials which satisfy expression (\ref{defining_equation}) \cite{ArtamanovCohn1999, Eric2008, Morozov2010, RuedaSendra, ZhangYuanGao}. This implies that the Sylvester matrix and the notion of resultant encodes the existence of common factors (right factors in the noncommutative case) of a pair of polynomials and homogeneous polynomials in commutative multivariate algebras. Next, we explore whether the resultant is a complete criterion to decide the existence of common right factor for a pair of homogeneous polynomials in the quadratic algebra defined by the relation (\ref{GolovashkinMaksimov2005(1)}). 

\begin{example}
\begin{enumerate}
    \item [\rm (i)] Let $f = x^{2}+(1-b)xy-y^{2}=(x-y)(x+y)$ and $g = x^{2}+(1+b)xy+y^{2} = (x+y)^{2}$ be polynomials in $Q(0,b,0)$. Then $x+y$ is a common right factor of $f$ and $g$, and according to expression (\ref{matrix_quantum_plane}), its Sylvester matrix is given by
\begin{equation*}
        {\rm Syl}_{Q(0,b,0)}(f,g)=\begin{bmatrix}
1 & 0 & 1 & 0 \\
1-b & b & 1+b &  b \\
-1 & (1-b)b & 1 & (1+b)b \\
0 & -b & 0 & b
\end{bmatrix},
\end{equation*}
    
which implies that ${\rm Res}_{Q(0,b,0)}(f,g)=-4b^{3}+4b^{2}=-4(b^{2})(b-1)$. In this way, $f$ and $g$ have a common right factor but ${\rm Res}_{Q(0,b,0)}(f,g)=0$. 

\item [\rm (ii)] Let us take $f(x,y)=e_{0}x^{2}+e_{2}y^{2}$, $g(x,y)=l_{0}x^{2}+l_{2}y^{2}$ be elements in $Q(a,0,0)$ with $e_{0}, e_{2}, l_{0}$ and $l_{2}$ being non-zero elements such that $e_{0}l_{2}\neq e_{0}l_{2}$. Then
\begin{equation*}
        {\rm Syl}_{Q(a,0,0)}(f,g)=\begin{bmatrix}
            e_{0} & ae_{0} & l_{0} & al_{0}\\
            0 & 0 & 0 & 0 \\
            e_{2} & 0 & l_{2} & 0 \\
            0 & ae_{2} & 0 & al_{2}
        \end{bmatrix},
\end{equation*}
    
whence ${\rm Res}_{Q(a,0,0)}=0$. However, these polynomials do not have a common right factor. If there exists a polynomial $p(x,y)=b_{0}x+b_{1}y$ such that 
\begin{align*}
    e_{0}x^{2}+e_{2}y^{2}&=(a_{0}x+a_{1}y)(b_{0}x+b_{1}y)\\
     l_{0}x^{2}+l_{2}y^{2}&=(c_{0}x+c_{1}y)(b_{0}x+b_{1}y),
\end{align*}

we can see that the system has a solution if and only if $e_{0}l_{2}=e_{0}l_{2}$, a contradiction.

\item [\rm (iii)] Consider the polynomials $f(x,y) =(1+a)x^{2}+(1+c)y^{2}=(x+y)^{2}$ and $g(x,y) = (1-a)x^{2}+2xy+(1-c)y^{2}=(x-y)(x+y)$ in the quadratic algebra $Q(a,-1,c)$. Then
\begin{equation*}
        {\rm Syl}_{Q(a,-1,c)}(f,g)=\begin{bmatrix}
            1+a & 0 & 1-a & 0\\
            0 & 1+a & 2 & 1+a \\
            1+c & 0 & 1-c & -2 \\
            0 & 1+c & 0 & 1+c
        \end{bmatrix}.
\end{equation*}

This fact implies that ${\rm Res}_{Q(a,-1,c)}(f,g)=-4(a+ac+c+1) \neq 0$. Again, the fact that $f$ and $g$ have common right factor does not imply that ${\rm Res}_{Q(a,-1,c)}(f,g)=0$. On the other hand, if we take the following polynomials $f(x,y) = x^2$ and $g(x,y) = xy$, it can be seen that
    \begin{equation*}
        {\rm Syl}_{Q(a,-1,c)}(f,g)=\begin{bmatrix}
            1 & 0 & 0 & 0\\
            0 & 1 & 1 & a \\
            0 & 0 & 0 & -1 \\
            0 & 0 & 0 & c
        \end{bmatrix},
    \end{equation*}

which implies that ${\rm Res}_{Q(a,-1,c)}=0$. However $f$ and $g$ cannot have a common right factor: all possible factorizations of $x^{2}$ are given by $x^{2} = (kx)(k^{-1}x)$ and $x^{2} = \frac{k}{k+aw}(x-cy)(x-cy)$, while the unique possible factorizations of $xy$ are $xy=(kx)(k^{-1}y)$, so the assertion follows.
\end{enumerate}
\end{example}

The following theorem shows that the behavior of this initial version of resultant via Sylvester matrix differs significantly from the classical one, as it implies that the resultant becomes zero in cases where the polynomials have the same degree.

\begin{theorem}
    Let $f(x,y), g(x,y)$ be two polynomials in $Q(a,0,0)$ having the same total degree $n$. Then ${\rm Res}_{Q(a,0,0)}(f,g)=0$.
\end{theorem}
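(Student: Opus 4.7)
The plan is to exhibit an explicit linear dependence among the $2n$ columns of the Sylvester matrix. First, I would read off each column using the commutation $y^{j}x^{k}=a^{j}x^{j+k}$ (which follows by induction from $yx=ax^{2}$, valid for $k\geq 1$), together with the fact that $y^{j-1}\!\cdot y^{n}=y^{n+j-1}$ acquires \emph{no} factor of $a$. Writing $f=\sum_{i=0}^{n}e_{i}x^{n-i}y^{i}$, a direct computation shows that the $j$-th column $C_{j}$ of ${\rm Syl}_{Q(a,0,0)}(f,g)$ for $1\leq j\leq n$ (coming from the multiplier $x^{n-j}y^{j-1}$ applied to $f$) has entries $a^{j-1}e_{0},a^{j-1}e_{1},\ldots,a^{j-1}e_{n-1}$ in rows $1$ through $n$, the entry $e_{n}$ at row $n+j$, and zero everywhere else. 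The remaining columns $C_{n+1},\ldots,C_{2n}$ admit the analogous description with $l_{0},\ldots,l_{n-1}$ in place of the $e_{i}$'s and $l_{n}$ in place of $e_{n}$; this agrees with the shape displayed in Subsection~\ref{Sylvestermatrix}.

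Next I would form the combinations $a^{j-1}C_{1}-C_{j}$ for $2\leq j\leq n$. By construction the first $n$ rows cancel exactly and only the tail survives, yielding
\[
a^{j-1}C_{1}-C_{j} \;=\; e_{n}\bigl(a^{j-1}\mathbf{u}_{n+1}-\mathbf{u}_{n+j}\bigr),
\]
where $\mathbf{u}_{r}$ denotes the $r$-th standard basis vector of $\Bbbk^{2n}$. Applying the same calculation to $g$ yields $a^{j-1}C_{n+1}-C_{n+j} = l_{n}\bigl(a^{j-1}\mathbf{u}_{n+1}-\mathbf{u}_{n+j}\bigr)$. Specializing to $j=2$ (which requires $n\geq 2$) and eliminating the common right-hand side produces the linear relation
\[
a\,l_{n}\,C_{1} \;-\; l_{n}\,C_{2} \;-\; a\,e_{n}\,C_{n+1} \;+\; e_{n}\,C_{n+2} \;=\; 0,
\]
which is non-trivial as soon as $(e_{n},l_{n})\neq(0,0)$. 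If instead $e_{n}=l_{n}=0$, then by the column formula above the last $n$ rows of ${\rm Syl}_{Q(a,0,0)}(f,g)$ are identically zero. In either case $\det\bigl({\rm Syl}_{Q(a,0,0)}(f,g)\bigr)=0$, so ${\rm Res}_{Q(a,0,0)}(f,g)=0$.

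The main subtlety is just the careful bookkeeping in the first step, and specifically the asymmetric behaviour of the top coefficient $e_{n}$ (and likewise $l_{n}$): because $y^{j-1}\cdot y^{n}$ picks up no factor of $a$, the contribution of the $y^{n}$ term of $f$ cascades \emph{diagonally} down the matrix rather than participating in a uniform column scaling. It is exactly this mismatch — between the first $n$ rows, each of which lies in the two-dimensional span of $(1,a,\ldots,a^{n-1},0,\ldots,0)$ and $(0,\ldots,0,1,a,\ldots,a^{n-1})$, and the diagonal tail of $e_{n}$'s and $l_{n}$'s in the bottom $n$ rows — that the explicit column dependence above exploits.
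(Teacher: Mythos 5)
Your argument is correct and is essentially the paper's own proof: both exploit the same column structure of ${\rm Syl}_{Q(a,0,0)}(f,g)$ (uniform scaling by powers of $a$ in the top $n$ rows versus the diagonal tail of $e_n$'s and $l_n$'s) and derive a column dependence from the operations $aC_1-C_2$ and $aC_{n+1}-C_{n+2}$, your explicit identity $a\,l_nC_1-l_nC_2-a\,e_nC_{n+1}+e_nC_{n+2}=0$ being exactly those two operations combined. If anything you are slightly more careful than the paper — you handle the case $e_n=l_n=0$ and avoid dividing by $l_n$ — and, like the paper, your argument tacitly requires $n\geq 2$ (for $n=1$ the statement fails for generic coefficients), a restriction you at least flag explicitly.
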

\begin{proof}
    Let $f,g$ be two polynomials in $Q(a,0,0)$ such that $f(x,y) = \sum\limits_{i=0}^{n}e_{i}x^{n-i}y^{i}$ and $g(x,y) = \sum\limits_{j=0}^{n}l_{j}x^{n-j}y^{j}$. The Sylvester matrix of $f$ and $g$, ${\rm Syl}_{Q(a,0,0)}(f,g)$, is a square matrix $2n \times 2n$ of the form
{\normalsize{
\begin{align*}
    \begin{bmatrix}
        e_{0} & ae_{0} & a^{2}e_{0}  & \hdots & a^{n-1}e_{0} & l_{0} & al_{0} & a^{2}l_{0}& \hdots & a^{n-1}l_{0}\\
        e_{1} & ae_{1} & a^{2}e_{1} &  & a^{n-1}e_{1}& l_{1} & al_{1} & a^{2}l_{1} & & a^{n-1}l_{1} \\
        e_{2} & ae_{2} & a^{2}e_{2}  & &a^{n-1}e_{2}&l_{2} & al_{2} & a^{2}l_{2}& & a^{n-1}l_{2}\\
        e_{3} & ae_{3} & a^{2}e_{3} & & a^{n-1}e_{3} & l_{3} & al_{3} & a^{2}l_{3}&\hdots & a^{n-1}l_{3}\\
        \vdots & \vdots & \vdots & \vdots & \cdots 
     & \vdots & \vdots & \vdots & \vdots & \vdots\\
        e_{n-2} & ae_{n-2} & a^{2}e_{n-2} & & a^{n-1}e_{n-2}& l_{n-2} & al_{n-2} & a^{2}l_{n-2} & & a^{n-1}l_{n-2}\\
        e_{n-1} & ae_{n-1} & a^{2}e_{n-1}& & a^{n-1}e_{n-1} & l_{n-1} & al_{n-1} & a^{2}l_{n-1} & &a^{n-1}l_{n-1}\\
        e_{n}& 0 & 0  & & 0 & l_{n} & 0 & 0 &  &0 \\
        0 & e_{n} & 0& & 0 & 0 & l_{n} &0  & & 0\\
        0 & 0 & e_{n} &  & 0 & 0 & 0 & l_{n} & & 0\\
        \vdots  & \vdots & \vdots & \vdots & \cdots &\vdots& \vdots & \vdots & \vdots &  \\
        0 & 0 & 0  & \hdots & e_{n} & 0 & 0 & 0 & \hdots& l_{n}
    \end{bmatrix}.
\end{align*}
}}

It is easy to see that some columns can be reduced to zero. For example, the second column has the coefficients of the polynomial $f(x,y)$ multiplied by $a$ (except for the last coefficient), as it also happens with the column $n+2$ where we have the coefficients of $g(x,y)$ multiplied by $a$. Thus, the matrix can be reduced by operations between columns, in this case, multiplying the first column and column $n+1$ by $a$ and subtracting with columns $2$ and $n+2$, respectively. With this operation, we obtain the following equivalent matrix:
\begin{align*}
\begin{bmatrix}
        e_{0} & 0 & a^{2}e_{0}  & \hdots & a^{n-1}e_{0} & l_{0} & 0 & a^{2}l_{0}& \hdots & a^{n-1}l_{0}\\
        e_{1} & 0 & a^{2}e_{1} &  & a^{n-1}e_{1}& l_{1} & 0 & a^{2}l_{1} & & a^{n-1}l_{1} \\
        e_{2} & 0 & a^{2}e_{2}  & &a^{n-1}e_{2}&l_{2} & 0 & a^{2}l_{2}& & a^{n-1}l_{2}\\
        e_{3} & 0 & a^{2}e_{3} & & a^{n-1}e_{3} & l_{3} & 0 & a^{2}l_{3}&\hdots & a^{n-1}l_{3}\\
        \vdots & \vdots & \vdots & \vdots & \cdots 
     & \vdots & \vdots & \vdots & \vdots & \vdots\\
        e_{n-2} & 0 & a^{2}e_{n-2} & & a^{n-1}e_{n-2}& l_{n-2} & 0 & a^{2}l_{n-2} & & a^{n-1}l_{n-2}\\
        e_{n-1} & 0 & a^{2}e_{n-1}& & a^{n-1}e_{n-1} & l_{n-1} & 0 & a^{2}l_{n-1} & &a^{n-1}l_{n-1}\\
        e_{n}& 0 & 0  & & 0 & l_{n} & 0 & 0 &  &0 \\
        0 & e_{n} & 0& & 0 & 0 & l_{n} &0  & & 0\\
        0 & 0 & e_{n} &  & 0 & 0 & 0 & l_{n} & & 0\\
        \vdots  & \vdots & \vdots & \vdots & \cdots &\vdots& \vdots & \vdots & \vdots &  \\
        0 & 0 & 0  & \hdots & e_{n} & 0 & 0 & 0 & \hdots& l_{n}
    \end{bmatrix}.
\end{align*}

Now, subtracting the columns $2$ and $n+2$ multiplied by $e_n/l_n$, we can reduce the second column to zero. Thus, after some operations, the Sylvester matrix is equivalent to a matrix that has a column with zero entries, so that its determinant is zero, i.e. ${\rm Res}_{Q(a,0,0)}(f,g)=0$. 
\end{proof}
\begin{remark}
    We compare the form of the Sylvester matrix for the same values of $f$ and $g$ seeing how different can be according to the values of the parameters $a, b$ and $c$. Let us take:
\begin{align*}  f(x,y)&=e_{0}x^{3}+e_{1}x^{2}y+e_{2}xy^{2}+e_{3}y^{3}\\
g(x,y)&=l_{0}x^{2}+l_{1}xy+l_{2}y^{2}.
\end{align*}

In this way, the matrix has the following form in every possibility:
\begin{align*}
    {\rm Syl}_{Q(a,0,0)}(f,g) = &\ \begin{bmatrix}
e_{0} & ae_{0} & l_{0} & al_{0} & a^{2}l_{0}\\
e_{1} & ae_{1} & l_{1} & al_{1} & a^{2}l_{1} \\
e_{2} & ae_{2} & l_{2} & 0 & a^{2}l_{2} \\
e_{3} & 0 & 0 & l_{2} &0\\
0 & e_{3} & 0 & 0 & l_{2}
\end{bmatrix}, \\
{\rm Syl}_{Q(0,b,0)}(f,g) = &\ \begin{bmatrix}
e_{0} & 0 & l_{0} & 0 & 0\\
e_{1} & b^{3}e_{0} & l_{1} & b^{2}l_{0} & 0 \\
e_{2} & b^{2}e_{1} & l_{2} & bl_{1} & b^{4}l_{0} \\
e_{3} & be_{2} & 0 & l_{2} & b^{2}l_{1}\\
0 & e_{3} & 0 & 0 & l_{2}
\end{bmatrix},\\
{\rm Syl}_{Q(a,-1,c)}(f,g) = &\ \begin{bmatrix}
e_{0} & e_{0}a & l_{0} & 0 & 0\\
e_{1} & -e_{0} & l_{1} & l_{0}+al_{1} & 0 \\
e_{2} & e_{0}c+e_{1}+e_{2}a & l_{2} & -l_{1} & l_{0}\\
e_{3} & -e_{2} & 0 & b+l_{2} & l_{1}\\
0 & e_{2}+e_{3} & 0 & 0 & l_{2}
\end{bmatrix}.
\end{align*}
\end{remark}

\section{Burchnall-Chaundy theory for quadratic algebras}\label{DeterminantPolynomials}

Following \cite[Section 3]{Larsson2014}, we generalize the notion of resultant to the context of quadratic algebras. The key fact of our treatment is that $Q(a,b,c)$ is an $\Bbbk$-algebra finitely generated over $\Bbbk[x]$ because the set $\{x^{i}y^{j}\mid i,j \geq 0\}$ is a PBW basis. The following definition is analog to the concept of determinant polynomial found in \cite[section 3]{Larsson2014}, which is a matrix concept that is, in fact, independent of the noncommutative structure of algebra. 

\begin{definition}[{\cite[Definition 1.3.1]{LiPhD1996}; \cite[p. 241]{Mishra1993}}]
    Let $M\in M_{r\times c}(\Bbbk[x])$. Then we define the {\em determinant polynomial of} $M$, denoted by $|M|$, in the following way:
\begin{equation*}
    |M|=\sum_{i=0}^{c-r} {\rm det}(M_{i})y^{i},
\end{equation*}

where $M_{i}$ is the square matrix that satisfies the following properties:
\begin{enumerate}
    \item [\rm (i)] The first $r-1$ columns of $M_i$ are the same that the first $r-1$ columns of $M$.
    \item [\rm (ii)] The last column of $M_i$ is the $(c-i)$th column of $M$.
\end{enumerate}
\end{definition}

An important remark is that the following proposition remains valid for $Q(a,b,c)$. This is due to the fact that the calculations involved in the determinant will be done under certain assumptions about the order in which the multiplications are done.

\begin{proposition}[{\cite[Proposition B.2.1]{RichterPhD2014}}]\label{Richter2014PropositionB.2.1}
Let $M\in M_{r\times c}(\Bbbk[x])$ with determinant polynomial $|M|$. For $i = 1,\ldots,r$, if $H_{i}$ is the polynomial given by
\begin{equation*}
    H_i = m_{i1}y^{c-1}+\cdots+m_{ir}y^{c-r}+\cdots+m_{ic},
\end{equation*}
then
\begin{equation*}
    |M|={\rm det}\begin{bmatrix}
    m_{1,d} & \dotsc & m_{1,d-r+1} & H_{1}\\
    m_{2,d} & \dotsc & m_{2,d-r+1} & H_{2}\\
    \vdots & \dotsc & \vdots & \vdots\\
    m_{r,d} & \dotsc & m_{r,d-r+1} & H_{r}
    \end{bmatrix}.
\end{equation*}    
\end{proposition}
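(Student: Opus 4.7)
The plan is to reduce the statement to a purely commutative determinant identity and then transfer it to $Q(a,b,c)$ via the PBW normal form. Concretely, I would regard the right-hand matrix $N$ as an $r\times r$ matrix with entries in the commutative polynomial ring $\Bbbk[x][y]$: its first $r-1$ columns lie in $\Bbbk[x]$, and its last column holds the $H_i$'s in $\Bbbk[x][y]$. Then $\det(N)$ is unambiguously defined by the classical formula. The ordering convention needed is that every $y^k$ appearing in the computation is kept to the right of all coefficients from $\Bbbk[x]$; this is the ``certain assumption about the order of multiplications'' highlighted in the remark preceding the proposition, and it matches the PBW normal form used to embed $\Bbbk[x][y]$ into $Q(a,b,c)$.

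First I would expand $\det(N)$ along the last column, obtaining
\[
\det(N) \;=\; \sum_{i=1}^{r} (-1)^{i+r}\, H_i\, \Delta_i,
\]
where $\Delta_i\in\Bbbk[x]$ is the minor obtained by deleting row $i$ and column $r$ of $N$. Substituting $H_i=\sum_{j=1}^{c}m_{ij}\,y^{c-j}$ and rearranging (legitimate because the $m_{ij}$ and $\Delta_i$ all lie in the commutative ring $\Bbbk[x]$) yields
\[
\det(N) \;=\; \sum_{j=1}^{c}\Bigl(\sum_{i=1}^{r}(-1)^{i+r}\,m_{ij}\,\Delta_i\Bigr)\,y^{c-j}.
\]
The inner sum is the cofactor expansion along the last column of the $r\times r$ matrix whose first $r-1$ columns are those of $M$ and whose last column is the $j$th column of $M$. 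For $1\le j\le r-1$ this matrix has two equal columns, so its determinant vanishes; for $r\le j\le c$ it is precisely $M_{c-j}$ in the sense of the preceding Definition, so the inner sum equals $\det(M_{c-j})$. Reindexing by $i=c-j$ turns the surviving sum into $\sum_{i=0}^{c-r}\det(M_i)\,y^i=|M|$.

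The only genuinely nontrivial point is justifying that this commutative computation survives inside the noncommutative algebra $Q(a,b,c)$. Nothing in the manipulations above swaps a $y$ past an $x$: the $y$-containing factors $H_i$ remain confined to the last column during the cofactor expansion, and once extracted they appear as powers $y^{c-j}$ multiplied on the right by quantities from $\Bbbk[x]$ only. The resulting polynomial is therefore already in the PBW normal form $\sum p_i(x)y^i$, so its interpretation in $Q(a,b,c)$ is unambiguous and coincides with $|M|$. This is precisely what allows the commutative argument from Richter's thesis to transfer to the present setting without modification, as anticipated in the remark preceding the statement.
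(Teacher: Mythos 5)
Your argument is correct. Be aware, however, that the paper does not actually prove this proposition: it is imported verbatim from Richter's thesis (ultimately Li's determinant-polynomial lemma), and the surrounding text only records the convention that, in expanding the determinant, elements of $\Bbbk[x]$ are always multiplied on the left of terms $a y^{i}$, so that everything stays in PBW normal form. Your proof --- cofactor expansion along the last column, vanishing of the contributions with $1\le j\le r-1$ because of a repeated column, identification of the surviving inner sums with $\det(M_{c-j})$, and reindexing to recover $\sum_{i=0}^{c-r}\det(M_i)y^{i}$ --- is precisely the standard argument behind the cited result, and your closing paragraph supplies the same well-definedness observation that the paper's remark is meant to cover, since no $y$ is ever moved past an $x$ and the relation of $Q(a,b,c)$ is never used. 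One small point in your favor: you silently repaired the statement as printed, which lists $r$ scalar columns $m_{i,d},\dots,m_{i,d-r+1}$ (with $d$ undefined) alongside the column of the $H_i$'s; reading the scalar block as the first $r-1$ columns of $M$, as you do, is the formulation under which the identity is true and matches the sources.
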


In the expansion of this determinant, the elements not involving $y$ will be always multiplied from the left. This will be denoted as $\texttt{mult}(ay^{i},x):=xay^{i}$. In this way, there is no any problem with the expansion of this determinant. 

Having in mind the definition presented in \cite[Definition 3.1]{Larsson2014}, we introduce the concept of determinant polynomial associated to a sequence of polynomials in $Q(a,b,c)$.

\begin{definition}[{\cite[Section B.2.1]{Richter2014}}]
Let $G:=(g_1, g_2,\ldots, g_r)$ be a sequence of polynomials in $Q(a,b,c)$, and let $d$ be the maximum degree of the polynomials. Assume $d\geq r$. We define the {\em matrix of size} $r\times (d+1)$,  denoted by $M(G)$, whose entry in the $i$th row and $j$th column is the coefficient of the monomial $y^{d+1-j}$ in $g_{j}$. The determinant polynomial of $G$ is $|M(G)|$ and it is denoted by $|G|$.
\end{definition}

The next proposition describes an easy way to calculate the determinant polynomial of a sequence of polynomials in $Q(a,b,c)$. 

\begin{proposition}[{\cite[Proposition B.2.2]{RichterPhD2014}}]
Let $G:=(g_{1}, g_{2},\ldots,g_{r})$ be a sequence of elements of $Q(a,b,c)$ of maximum degree $d$ with respect to $y$. Then
    \begin{equation*}
    |G|={\rm det}\begin{bmatrix}
    a_{1,d} & \dotsc & a_{1,d-r+1} & g_{1}\\
    a_{2,d} & \dotsc & a_{2,d-r+1} & g_{2}\\
    \vdots & \dotsc & \vdots & \vdots\\
    a_{r,d} & \dotsc & a_{r,d-r+1} & g_{r}
    \end{bmatrix}.
\end{equation*}
where $a_{i,j}$ is the coefficient of $y^{j}$ in $g_{j}(x,y)$ written in its normal form.
\end{proposition}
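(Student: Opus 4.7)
The plan is to derive this proposition as a direct specialization of Proposition \ref{Richter2014PropositionB.2.1} to the specific matrix $M(G)$ associated to the sequence $G$. Since $|G|=|M(G)|$ by definition, it suffices to show that applying the determinantal formula of Proposition \ref{Richter2014PropositionB.2.1} to $M(G)$ produces exactly the matrix displayed in the statement.

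First I would unpack $M(G)$: by definition its $(i,j)$-entry is the coefficient of $y^{d+1-j}$ in the normal form of $g_{i}$, so the $i$-th row lists the coefficients $(a_{i,d}, a_{i,d-1}, \dotsc, a_{i,0})$ of $g_{i}$ in decreasing order of the powers of $y$. Substituting this into the polynomial $H_{i}$ of Proposition \ref{Richter2014PropositionB.2.1}, with $c = d+1$, one obtains
\[
H_{i} = m_{i,1}y^{c-1} + \dotsb + m_{i,c} = a_{i,d}y^{d} + a_{i,d-1}y^{d-1} + \dotsb + a_{i,0} = g_{i},
\]
which is precisely $g_{i}$ in its normal form. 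Replacing each $H_{i}$ with $g_{i}$ in the conclusion of Proposition \ref{Richter2014PropositionB.2.1}, and identifying the remaining scalar entries $m_{i,k}$ with the coefficients $a_{i,d+1-k}$ of $g_{i}$, produces the stated matrix. The hypothesis $d\geq r$ guarantees that all the relevant column indices remain within the valid range, so every $a_{i,d-r+1}$ makes sense as a coefficient of $g_{i}$.

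The only subtlety I anticipate flagging, rather than a deep obstacle, is that the last column of the determinant consists of polynomials involving $y$, so the formal cofactor expansion along that column multiplies scalar minors from $\Bbbk[x]$ by the polynomial entries $g_{i}$. Because $Q(a,b,c)$ is noncommutative, this expansion must be performed under the convention $\texttt{mult}(ay^{i},x):=xay^{i}$ introduced immediately before the proposition, placing $\Bbbk[x]$-scalars on the left of the $y$-monomials. With that convention the expansion is well-defined, and by multilinearity of the determinant in its last column, collecting the coefficient of each $y^{k}$ recovers the scalar determinant $\det(M_{k})$ from the definition of $|M(G)|$. This is the step where the noncommutative nature of the algebra enters, but it requires no further hypothesis on $a,b,c$ beyond those already built into the PBW structure.
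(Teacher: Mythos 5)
Your argument is correct and follows essentially the same route as the paper: the paper's proof likewise obtains the formula by applying Proposition \ref{Richter2014PropositionB.2.1} to $M(G)$, observing that its entries are the coefficients of the $g_i$ so that each $H_i$ is just $g_i$ in normal form, under the stated left-multiplication convention. Your write-up simply makes these identifications (including $c=d+1$ and the role of $d\geq r$) more explicit.
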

\begin{proof}
This proposition is formulated within the same assumptions as in \cite[p. 133]{Li1998}. The assertion follows from Proposition \ref{Richter2014PropositionB.2.1} since the matrix entries correspond to the coefficients of the given polynomials.
\end{proof}

The following definition is motivated by Richter and Silvestrov \cite[Section 2.2]{RichterSilvestrov2012}.

\begin{definition}\label{resultant}
Let $f$ and $g$ be two elements of $Q(a,b,c)$ of degree $n$ and $m$, respectively. The {\em resultant of} $f$ {\em and} $g$, denoted  $\texttt{Res}(f,g)$, is the determinant polynomial of the sequence
\begin{equation*}
    f, yf, y^{2}f,\ldots, y^{m-1}f,g,yg,y^{2}g,\ldots,y^{n-1}g.
\end{equation*}
\end{definition}

An important consequence that can be visualized is the following property.

\begin{proposition}\label{expansion}
    For $f$ and $g$ polynomials in $Q(a,b,c)$, 
\begin{equation*}
    \texttt{Res}(f,g)=F_{1}(x)f+F_{2}(x)g.
\end{equation*}
\end{proposition}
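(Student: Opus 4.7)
The plan is to read off the claim directly from the cofactor expansion of the determinant appearing in the preceding proposition. Unpacking the definition, $\texttt{Res}(f,g) = |G|$ for the sequence $G = (f, yf, \ldots, y^{m-1}f,\, g, yg, \ldots, y^{n-1}g)$ of length $m+n$, so by the determinant-polynomial formula it equals the determinant of an $(m+n)\times(m+n)$ matrix whose first $m+n-1$ columns collect the coefficients $a_{i,j}\in\Bbbk[x]$ of the normal forms of the elements of $G$ and whose final column lists those elements themselves.

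Next, I would Laplace-expand along that last column. The convention $\texttt{mult}(\alpha y^{k},x):=x\alpha y^{k}$ recorded right after Proposition \ref{Richter2014PropositionB.2.1} is precisely what is needed to make this expansion unambiguous in the noncommutative setting: each cofactor lives in the commutative subring $\Bbbk[x]$ and multiplies the corresponding polynomial entry from the left. The resulting expression has the shape
\begin{equation*}
\texttt{Res}(f,g) \;=\; \sum_{i=0}^{m-1} c_i(x)\,(y^{i} f) \;+\; \sum_{j=0}^{n-1} d_j(x)\,(y^{j} g),
\end{equation*}
where each $c_i(x),d_j(x)\in \Bbbk[x]$ is, up to sign, a minor of the scalar part of the matrix.

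Finally, associativity of the product in $Q(a,b,c)$ lets me regroup $c_i(x)\,(y^{i} f) = \bigl(c_i(x)y^{i}\bigr) f$, and likewise for the $g$-terms, so the polynomials
\begin{equation*}
F_{1} := \sum_{i=0}^{m-1} c_i(x)\,y^{i}, \qquad F_{2} := \sum_{j=0}^{n-1} d_j(x)\,y^{j}
\end{equation*}
witness the identity $\texttt{Res}(f,g) = F_{1} f + F_{2} g$ claimed by the proposition. I do not foresee a real obstacle: the only delicate point is justifying a Laplace expansion along a single column of a matrix whose last column is noncommutative while the remaining columns lie in $\Bbbk[x]$, but under the left-multiplication convention enforced by $\texttt{mult}$ this reduces to the standard commutative expansion with the last factor kept untouched.
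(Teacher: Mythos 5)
Your proposal is correct and takes essentially the same route as the paper: write $\texttt{Res}(f,g)$ as the determinant whose last column consists of $f, yf,\ldots, y^{m-1}f, g, yg,\ldots, y^{n-1}g$, then Laplace-expand along that column, with the cofactors lying in $\Bbbk[x]$ and acting from the left under the paper's $\texttt{mult}$ convention. Your explicit regrouping $c_i(x)\,(y^{i}f)=\bigl(c_i(x)y^{i}\bigr)f$ is in fact a bit more careful than the paper's one-line expansion, and it makes visible that the factors $F_{1},F_{2}$ generally involve $y$ as well, something the paper's notation $F_{1}(x),F_{2}(x)$ leaves implicit.
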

\begin{proof}
    Notice that according to Definition \ref{resultant}, we have
\begin{align*}
    \texttt{Res}(f,g)={\rm det}\begin{bmatrix}
    a_{1,d} & \dotsc & a_{1,d-r+1} & f\\
    a_{2,d} & \dotsc & a_{2,d-r+1} & yf\\
    \vdots & \dotsc & \vdots & \vdots\\
    a_{r,d} & \dotsc & a_{r,d-r+1} & y^{n-1}f\\
     b_{1,d} & \dotsc & b_{1,d-r+1} & g\\
    b_{2,d} & \dotsc & b_{2,d-r+1} & yg\\
    \vdots & \dotsc & \vdots & \vdots\\
    b_{r,d} & \dotsc & b_{r,d-r+1} & y^{m-1}g
\end{bmatrix}.
\end{align*}
    
The expansion of this determinant along the last column shows the result has the expression that we are describing.
\end{proof}

The most important application for our interest is the following result, which establish a version of Burchnall-Chaundy theory for the cases of $Q(a,b,c)$ that we are considering.

\begin{theorem}\label{BC_corollary}
Let $f$ and $g$ be two polynomials such that $fg=gf$. Then there exists a polynomial $F(s,t)$ such that $F(f,g)=0$.
\begin{proof}
    Let $s, t$ be two new indeterminates such that they commute with all the elements in $Q(a,b,c)$. Let us establish that the $s$ and $t$ have degree zero, then according to Proposition \ref{expansion}, the following equality holds for some polynomials:
\begin{equation*}
        F(s,t)=\texttt{Res}(f-s,g-t)=F_{1}(x)(f-s)+F_{2}(x)(g-t)
\end{equation*}
    
Since $fg = gf$, it is not ambiguous the evaluation of $f$ and $g$ in $F(s,t)$, and then, the fact that $F(f,g)=0$.
\end{proof}
\end{theorem}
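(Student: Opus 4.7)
The plan is to implement the classical Burchnall--Chaundy strategy through the resultant machinery just built: adjoin two commuting central indeterminates $s$ and $t$ to $Q(a,b,c)$, form the resultant $F(s,t) := \texttt{Res}(f-s, g-t)$ inside the extended algebra, and then exploit the cofactor identity from Proposition \ref{expansion} together with the hypothesis $fg=gf$ to conclude $F(f,g)=0$.

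First I would work inside the extended algebra $Q(a,b,c)[s,t]$, where $s$ and $t$ commute with each other and with every element of $Q(a,b,c)$. Adjoining central indeterminates preserves the PBW property $\{x^i y^j\}$, so all notions of Section \ref{DeterminantPolynomials} — normal form with respect to $y$, the matrix $M(G)$ of a sequence, and the determinant polynomial $|G|$ — transfer without change to this larger ring. Viewing $f-s$ and $g-t$ as polynomials of $y$-degree $n$ and $m$ respectively (the perturbations by $s$ and $t$ only affect the constant-in-$y$ coefficient), I would form the sequence
\[
f-s,\ y(f-s),\ \dotsc,\ y^{m-1}(f-s),\ g-t,\ y(g-t),\ \dotsc,\ y^{n-1}(g-t),
\]
and take $F(s,t)$ to be its determinant polynomial.

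By Proposition \ref{expansion} applied in $Q(a,b,c)[s,t]$, there exist elements $F_1, F_2$ (polynomial in $x, s, t$) such that
\[
F(s,t) \;=\; F_1(x,s,t)(f-s) \;+\; F_2(x,s,t)(g-t).
\]
Regrouping, $F(s,t)$ is a polynomial expression in $s$ and $t$ with coefficients in $Q(a,b,c)$; because $s$ and $t$ are central and the expansion of the determinant along the last column multiplies cofactors from the left as in the convention $\texttt{mult}(ay^i,x)=xay^i$, no ordering ambiguity arises. Finally I would perform the substitution $s \mapsto f$, $t \mapsto g$. Under this substitution the right-hand side becomes $F_1(x,f,g)(f-f)+F_2(x,f,g)(g-g)=0$, which gives $F(f,g)=0$.

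The step most in need of care is justifying that the substitution $(s,t)\mapsto(f,g)$ is well defined on $F(s,t)$. A priori, replacing commuting indeterminates by noncommuting elements of $Q(a,b,c)$ is not legitimate: distinct monomial orderings $s^p t^q$ and $t^q s^p$ would be sent to $f^p g^q$ and $g^q f^p$. The hypothesis $fg=gf$ is exactly what makes these agree, so the evaluation homomorphism $\Bbbk[s,t]\to Q(a,b,c)$, $s\mapsto f$, $t\mapsto g$, is well defined, and it extends (still unambiguously) to a $Q(a,b,c)$-linear evaluation on $Q(a,b,c)[s,t]$ because the coefficients of $F(s,t)$ as a polynomial in $s,t$ lie in $Q(a,b,c)$ and do not interact with $f$ or $g$ through multiplication. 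Note that the statement asks only for the \emph{existence} of an annihilating $F$; verifying nontriviality $F\neq 0$ (the genuinely interesting content of Burchnall--Chaundy) is not required by this theorem and would need a separate argument based on the leading behavior of the resultant.
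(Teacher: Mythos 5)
Your argument is essentially the same as the paper's proof: you define $F(s,t)=\texttt{Res}(f-s,g-t)$ through the determinant polynomial of the shifted sequence, invoke Proposition \ref{expansion} to write $F(s,t)=F_{1}(f-s)+F_{2}(g-t)$, and use the hypothesis $fg=gf$ to make the evaluation $(s,t)\mapsto(f,g)$ unambiguous, giving $F(f,g)=0$. Your additional remarks on why the evaluation map is well defined and on the fact that nontriviality of $F$ is not claimed are simply a more explicit gloss on the same step the paper treats in one line.
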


As we have mentioned previously, our approach is fully inspired by exploring the application of the ideas exposed by Li \cite{Li1998} in our context. It is worth noting that Li develops a theory not only of resultants but more generally about subresultants. In fact, the concept of subresultant generalizes what we have described regarding the notion of a resultant, as it can be seen in the following definition.

\begin{definition}
    Let $f(x,y)$ and $g(x,y)$ be two polynomials in $Q(a,b,c)$ of degree $n$ and $m$ respectively, with respect to the variable $y$. Suppose $n\geq m$, the $l$th \textit{subresultant} of $f$ and $g$ is
    \begin{align*}
        \texttt{sRes}_{l}(f,g) := |y^{m-l-1}f, \ldots, yf,f,y^{n-l-1}g,\ldots,yg,g| 
    \end{align*}
\end{definition}

The $0$th subresultant $\texttt{sRes}_{0}(f, g)$ is in fact $\texttt{Res}(f, g)$.

\section{Examples}\label{ExamplesBC}

We are going to present some illustrative examples of the results obtained in the previous sections.

\begin{example}
    Let us take the polynomials $ f = x^{2}y^2, g = (x^{3}+x)y^2 \in Q(a,0,0)$. It can be seen that $fg=gf$, whence
\begin{align*}
    F(s,t) = &\ \texttt{Res}(f-s,g-t) \\
    = &\ {\rm det}\begin{bmatrix}
   -s & 0 & x^{2} & x^2y^2-s\\
    0& -s  & ax^3 & y(x^2y^2-s)\\
   -t & 0  & x^{3}+x& (x^{3}+x)y^2-t\\
   0 & -t  & a(x^4+x^2) & y\left((x^{3}+x)y^2-t\right)    \end{bmatrix}\\
   = &\ {\rm det}\begin{bmatrix}
   -s & 0 & x^{2} & (x^2y^2-s)\\
    0& -s  & ax^3 & ax^{3}y^{2}-sy\\
   -t & 0  & x^{3}+x & (x^{3}+x)y^2-t\\
   0 & -t  & a(x^4+x^2) & a(x^{4}+x^{2})y^{2}-ty  \end{bmatrix} \\
   = &\ 0.
\end{align*}
\end{example}

\begin{example}\label{coef_example_1}
For the polynomials $f = y^{2}, g = (x^2+1)y^2+1$ in $Q(a,0,0)$, we have $fg=gf=$, and
\begin{align*}
    F(s,t) = &\ \texttt{Res}(f-s,g-t) \\ = &\ {\rm det}\begin{bmatrix}
    -s & 0 & 1 & y^2-s\\
    0 & -s & 0 & y^3-ys\\
    1-t & 0 & x+1 & (x+1)y^2+(1-t) \\
    0 & (1-t) & ax^{2} & y^3+ax^2y^2+(1-t)y
   \end{bmatrix}\\
   = &\ s^2y^3+2sy^3+t^2y^3-2sty^3-2ty^3+s^2xy^3+sxy^3-stxy^3+y^3\\
   = &\ (s^2+2s+t^2-2st-2t+s^2x+sx-stx+1)y^3.
\end{align*}
\end{example}

The previous examples give us some ideas about the behavior of the resultant in $Q(a,0,0)$. Next, we give sufficient conditions on $f(x,y)$ and $g(x,y)$ for $\texttt{Res}(f,g) = 0$. First, we mention a case where the resultant is non-zero.

\begin{proposition}\label{linear-case}
For any pair of commuting elements of degree 1 in $Q(a, 0, 0)$, $F(s,t)=\texttt{Res}(f-s,g-t)\neq 0$.
\end{proposition}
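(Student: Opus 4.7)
The plan is to compute $F(s,t)$ directly from the definition, exploiting that the degree-one hypothesis forces the determinant polynomial to be a mere $2\times 2$ determinant whose entries we can control.

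First I would write $f$ and $g$ in normal form: $f = p_0(x) + p_1(x)y$ and $g = q_0(x) + q_1(x)y$, with $p_0, p_1, q_0, q_1 \in \Bbbk[x]$ and $p_1, q_1 \neq 0$ (the leading coefficients in $y$ are nonzero precisely because both polynomials have degree one in $y$). In Definition \ref{resultant} we have $m = n = 1$, so the defining sequence for $\texttt{Res}(f-s, g-t)$ collapses to the single pair $(f-s,\, g-t)$, and the associated matrix $M(G)$ from the proposition following that definition is $2\times 2$. Its first column consists of the leading $y$-coefficients $p_1, q_1$ and its last column consists of the polynomials $f-s, g-t$ themselves.

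Next, expanding this $2\times 2$ determinant with the left-multiplication convention recalled after Proposition \ref{Richter2014PropositionB.2.1} gives
\[
F(s,t) \;=\; p_1(g - t) \,-\, q_1(f - s) \;=\; (p_1 g - q_1 f) \,+\, q_1 s \,-\, p_1 t.
\]
I would then simplify $p_1 g - q_1 f$ by expanding: the $y$-parts contribute $p_1 q_1 y$ and $q_1 p_1 y$, which coincide because $p_1, q_1 \in \Bbbk[x]$ commute, so they cancel and the constant-in-$y$ remainder is $p_1 q_0 - q_1 p_0 \in \Bbbk[x]$. Hence
\[
F(s,t) \;=\; \bigl(p_1(x)q_0(x) - q_1(x)p_0(x)\bigr) \,+\, q_1(x)\,s \,-\, p_1(x)\,t,
\]
viewed as an element of $Q(a,0,0)[s,t]$.

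Finally, since $q_1 \neq 0$, the coefficient of $s$ in this expression is nonzero, and therefore $F(s,t) \neq 0$. There is essentially no obstacle: the argument is a direct unwinding of the definitions once the determinant is reduced to a $2\times 2$. The only delicate point is respecting the left-multiplication convention when expanding the determinant, so that the simplification $p_1 q_1 y - q_1 p_1 y = 0$ is justified by the commutativity of $\Bbbk[x]$ rather than by any property of $y$. It is worth noting that the commuting hypothesis $fg = gf$ is not actually used in this computation; it appears in the statement because $F(s,t)$ only acquires its Burchnall-Chaundy meaning from Theorem \ref{BC_corollary} when $f$ and $g$ commute.
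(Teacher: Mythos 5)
Your computation is correct and is essentially the paper's own argument: both proofs write $f=p_0+p_1y$, $g=q_0+q_1y$, reduce $\texttt{Res}(f-s,g-t)$ to a single $2\times 2$ determinant, and expand it to get an expression whose $s$- and $t$-coefficients are the nonzero polynomials $q_1,p_1\in\Bbbk[x]$. The only divergence is a column convention: the paper places the $y^0$-coefficients $p_0-s,\ q_0-t$ in the first column and so obtains $(p_1t-q_1s+p_0q_1-q_0p_1)\,y$ (your expression up to sign and a factor of $y$, equally nonzero), and it then additionally verifies $F(f,g)=0$, which the statement itself does not require.
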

\begin{proof}
Let $f=p_{0}+p_{1}y$ and $g=q_{0}+q_{1}y$ be two commuting elements in $Q(a,0,0)$. Then
\begin{align*}
     F(s,t) = \texttt{Res}(f-s,g-t) & ={\rm det}\begin{bmatrix}
   p_{0}-s & p_{0}-s+p_{1}y \\
    q_{0}-t&  q_{0}-t+q_{1}y \\
     \end{bmatrix}\\
     & = (p_{0}-s)(q_{0}-t+q_{1}y) - (q_{0}-t)(p_{0}-s+p_{1}y) \\
     & = (p_{1}t-q_{1}s+p_{0}q_{1}-q_{0}p_{1})y.
\end{align*}

Since $y$ is a non-zero divisor, if $F(s,t) := p_{1}t-q_{1}s+p_{0}q_{1}-q_{0}p_{1}$, then
\begin{align*}
    F(f,g)&=p_{1}(q_{0}+q_{1}y)-q_{1}(p_{0}+p_{1}y)+p_{0}q_{1}-q_{0}p_{1}\\
    &= p_{1}q_{0}+p_{1}q_{1}y-q_{1}p_{0}-q_{1}p_{1}y+p_{0}q_{1}-q_{0}p_{1} = 0.
\end{align*}
\end{proof}

\begin{remark}
In the proof of Proposition \ref{linear-case} we do not use the noncommutative structure of the algebra; it only uses that polynomials in the indeterminate $x$ are commutative which holds in every algebra of our interest in the paper.
\end{remark}

\begin{lemma}\label{zero_principal}
    Let $f(x,y)=\sum_{i=0}^{n} p_{i}(x)y^{i}$ be a polynomial of degree $m$ with respect to the indeterminate $y$ in $Q(a,0,0)$. If $x$ is a factor of the polynomial $p_{n}(x)$, then $y^{m}f(x,y)$ is a polynomial of degree less than $m+n$ with respect to $y$.
    \begin{proof}
        Since $x$ is a factor of $p_{n}(x)$, there exists a nonzero polynomial $r(x)$ such that $p_{n}(x)=xr(x)$. Then
        \begin{align*}
            y^{m}f(x,y)&= y^{m}\left( \sum_{i=0}^{n} p_{i}(x)y^{i}\right) = \sum_{i=0}^{n} y^{m}p_{i}(x)y^{i} = y^{m}p_{n}(x)y^{n}+\sum_{i=0}^{n-1} y^{m}p_{i}(x)y^{i}\\ 
            &= y^{m}xr(x)y^{n} + \sum_{i=0}^{n-1} y^{m}p_{i}(x)y^{i} = a^{m}x^{m+1}r(x)y^{n} + \sum_{i=0}^{n-1} y^{m}p_{i}(x)y^{i},
        \end{align*}
        since the polynomial $\sum_{i=0}^{n-1} y^{m}p_{i}(x)y^{i}$ has degree at most $m+(n-1)$ with respect to $y$. It follows that $y^{m}f(x,y)$ has degree at most $m+(n-1)$.
    \end{proof}
\end{lemma}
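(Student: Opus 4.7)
The plan is to expand $y^{m}f(x,y)$ monomial-by-monomial and apply the commutation rule $y^{m}x^{k}=a^{m}x^{m+k}$ (valid for $k\geq 1$ in $Q(a,0,0)$, as recalled in Section \ref{a00}) to track the degree in $y$ of each piece. Writing $y^{m}f(x,y)=\sum_{i=0}^{n}y^{m}p_{i}(x)y^{i}$, I would split this sum into the leading term (the $i=n$ piece) and the lower terms (the $i\leq n-1$ pieces) and show that each has degree strictly less than $m+n$ in $y$.

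First I would treat the leading term $y^{m}p_{n}(x)y^{n}$. By hypothesis $x\mid p_{n}(x)$, so write $p_{n}(x)=xr(x)$ with $r(x)\in\Bbbk[x]$. Because every monomial of $xr(x)$ has positive degree in $x$, the rule $y^{m}x^{k}=a^{m}x^{m+k}$ applies to each monomial, yielding $y^{m}p_{n}(x)=a^{m}x^{m+1}r(x)$, a polynomial in $x$ alone with no $y$-content. Multiplying on the right by $y^{n}$ then gives $y^{m}p_{n}(x)y^{n}=a^{m}x^{m+1}r(x)y^{n}$, which has degree exactly $n$ in $y$.

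For the remaining terms $y^{m}p_{i}(x)y^{i}$ with $i\leq n-1$, I would observe that the same commutation rule applied to the non-constant monomials of $p_{i}(x)$ produces expressions of the form $a^{m}x^{m+k}y^{i}$, contributing degree $i$ in $y$, while the constant term $c_{0}^{(i)}$ of $p_{i}$ merely gets multiplied by $y^{m}$, contributing at most $c_{0}^{(i)}y^{m+i}$. Either way, the degree in $y$ is at most $m+i\leq m+(n-1)$. Combining this with the leading-term estimate, the overall degree in $y$ of $y^{m}f(x,y)$ is bounded by $\max(n,\,m+n-1)=m+n-1<m+n$.

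The only conceptual point requiring care is the role of the divisibility hypothesis $x\mid p_{n}(x)$: without it, the constant term of $p_{n}$ would survive the multiplication by $y^{m}$ as $c_{0}^{(n)}y^{m+n}$, producing a genuine $y^{m+n}$ term and invalidating the conclusion. So the hypothesis $x\mid p_{n}(x)$ is being used precisely to eliminate that obstruction, and the argument is otherwise a direct bookkeeping computation using the commutation law of $Q(a,0,0)$.
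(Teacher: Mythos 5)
Your proposal is correct and follows essentially the same route as the paper's proof: split $y^{m}f$ into the $i=n$ term and the terms with $i\leq n-1$, write $p_{n}(x)=xr(x)$, and apply the rule $y^{m}x^{k}=a^{m}x^{m+k}$ to get $a^{m}x^{m+1}r(x)y^{n}$ plus a remainder of $y$-degree at most $m+(n-1)$. The only difference is that you spell out the bookkeeping for the lower-order terms (constant versus non-constant monomials of $p_{i}$) and the role of the hypothesis $x\mid p_{n}(x)$, which the paper leaves implicit.
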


\begin{theorem}
    Let $f(x,y) = \sum_{i=0}^{n} p_{i}(x)y^{i}$ and $g(x,y) = \sum_{j=0}^{m}q_{j}(x)y^{j}$ be two polynomials of $Q(a,0,0)$ with $m>1$ presented in their normal form. If $x$ is a factor of $p_{n}(x)$ and $q_{m}(x)$, then {{\rm \texttt{Res}}}$(f,g) = 0$. 
\end{theorem}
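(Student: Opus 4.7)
The plan is to apply Lemma~\ref{zero_principal} iteratively to show that every member of the defining sequence of $\texttt{Res}(f,g)$ has $y$-degree at most $m+n-2$. Since this bound is strictly less than $m+n-1$, the associated coefficient matrix will be ``too tall'', forcing its determinant polynomial to vanish on dimensional grounds.

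First I would set $G := (f,\,yf,\,y^2f,\,\ldots,\,y^{m-1}f,\,g,\,yg,\,\ldots,\,y^{n-1}g)$, the sequence of $r := m+n$ polynomials whose determinant polynomial is $\texttt{Res}(f,g)$. Since $x \mid p_n(x)$, Lemma~\ref{zero_principal} applied to $f$ with the exponent of $y$ taking each value $k = 1, 2, \ldots, m-1$ in turn (in place of the symbol $m$ appearing in the lemma) gives
\[
\deg_y(y^k f) \le k + n - 1,
\]
while obviously $\deg_y f = n$. The symmetric argument applied to $g$, using $x \mid q_m(x)$, yields $\deg_y(y^kg) \le k+m-1$ for each $k = 1, \ldots, n-1$, and $\deg_y g = m$. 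Taking the maximum over all entries of $G$, the largest $y$-degree $d$ appearing in $G$ satisfies $d \le m+n-2$.

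In the next step I would feed this into the defining formula
\[
|M(G)| = \sum_{i=0}^{c-r} \det(M_i)\,y^i,
\]
where $M(G)$ is the coefficient matrix of $G$ with $c = d+1$ columns. Here $c \le m+n-1 < r$, so the index set of the sum is empty and $|M(G)| = 0$, giving $\texttt{Res}(f,g) = 0$. An equivalent and perhaps more transparent picture is to pad $M(G)$ on the left by a zero column for the absent monomial $y^{m+n-1}$, producing an $(m+n)\times(m+n)$ square matrix whose first column vanishes identically; its determinant is then $0$ by direct inspection.

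The main subtle point is justifying that Lemma~\ref{zero_principal} may be applied uniformly to $y^k f$ and $y^k g$ for every admissible $k$, rather than only for a single exponent. This is routine: the proof of the lemma uses only the identity $y^k x^j = a^k x^{k+j}$ for $j \ge 1$ (which holds for every $k \ge 1$) together with the vanishing of the constant term of the leading $y$-coefficient, so the same argument delivers the required bound at each level.
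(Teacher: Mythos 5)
Your proposal is correct and takes essentially the same route as the paper: the paper likewise applies Lemma~\ref{zero_principal} to conclude that every entry in the column of $y^{m+n-1}$-coefficients of the $(m+n)\times(m+n)$ matrix vanishes, so the determinant polynomial is zero, which is precisely your padded-zero-column formulation (your ``empty sum'' phrasing is just a less convention-safe way of saying the same thing, since the paper's definition of the determinant polynomial presupposes $r\le c$). The only caveat, shared with the paper's own proof rather than being a defect of yours, is that the bound $\deg_y g = m \le m+n-2$ tacitly uses $n\ge 2$, which the stated hypothesis $m>1$ alone does not guarantee.
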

\begin{proof}
Let $f(x,y) = \sum_{i=0}^{n} p_{i}(x)y^{i}$ and $\sum_{j=0}^{m}q_{j}(x)y^{j}$ be two polynomials presented in their normal form. Since we assume the degree of $g$ greater than 1, this case is not the mentioned in Proposition \ref{linear-case}. Now, from the definition, we can compute $\texttt{Res}(f,g)$ as the determinant polynomial of the matrix 
{\tiny
\begin{equation*}
    \begin{bmatrix}
         p_0 & p_{1}  & \dotsc & p_n & 0 & 0 & \dotsc & 0\\   
         \pi_{0}(yf) & \pi_{1}(yf)  & \dotsc  &  \pi_{n}(yf) &  \pi_{n+1}(yf) & 0 & \dotsc  & 0\\
         \pi_{0}(y^2f) & \pi_{1}(y^2f)  & \dotsc  &  \pi_{n}(y^2f) &  \pi_{n+1}(y^2f) & \pi_{n+2}(y^2f) & \dotsc  & 0\\
            \vdots     &         \vdots    &      \vdots    &    \vdots   &   \vdots  &   \vdots  &        \vdots   &  \vdots  \\
         \pi_{0}(y^{m-1}f) & \pi_{1}(y^{m-1}f)  & \dotsc  &  \pi_{n}(y^{m-1}f) &  \pi_{n+1}(y^{m-1}f) & \pi_{n+2}(y^{m-1}f) & \dotsc  & \pi_{n+m-1}(y^{m-1}f)\\   
         q_0 & q_{1}  & \dotsc  & q_n & q_{n+1} & 0 & \dotsc  & 0\\ 
         \pi_{0}(yg) & \pi_{1}(yg)  & \dotsc  &  \pi_{n}(yg) &  \pi_{n+1}(yg) & 0 & \dotsc  & 0\\
         \pi_{0}(y^2g) & \pi_{1}(y^2g)  & \dotsc  &  \pi_{n}(y^2g) &  \pi_{n+1}(y^2g) & \pi_{n+2}(y^2g) & \dotsc  & 0\\
         \vdots     &     \vdots       &      \vdots    &    \vdots   &   \vdots  &   \vdots  &        \vdots   &  \vdots  \\
         \pi_{0}(y^{n-1}g) & \pi_{1}(y^{n-1}g)  & \dotsc  &  \pi_{n}(y^{n-1}g) &  \pi_{n+1}(y^{n-1}g) & \pi_{n+2}(y^{n-1}g) & \dotsc  & \pi_{n+m-1}(y^{n-1}g)\\
         \end{bmatrix}.
\end{equation*}
}

By using Lemma \ref{zero_principal}, we conclude that the entries along the last column are zero. This fact implies that the resultant is zero.        
\end{proof}

\begin{example}
Consider the polynomials $f = x^{2}y^2+xy, g = xy \in Q(0,b,0)$. Then
\begin{align*}
    fg &= \left(x^{2}y^{2}+xy\right)\left(xy\right) = b^2x^3y^3+bx^2y^2,\quad {\rm and} \\
    gf &= \left(xy\right)\left(x^{2}y^{2}+xy\right) = b^2x^3y^3+bx^2y^2.
\end{align*}

Hence,
\begin{align*}
    F(s,t) = &\ \texttt{Res}(f-s,g-t)={\rm det}\begin{bmatrix}
    -s & x & x^2y^{2}+xy-s\\
    -t & x & xy-t \\
    0 & -t & bxy^2-ty
     \end{bmatrix} = \left(-bs+bt+t^2\right)x^2y^2.
\end{align*}

Since $x^2y^{2} \neq 0$, it is easy to see that $F(s,t) = t^{2}-bs+bt$, and so
\[
    F(f,g) = (xy)^{2} - b(x^{2}y^2+xy) + b(xy) = bx^{2}y^{2} -bx^{2}y^{2}-bxy+bxy = 0.
\]
\end{example}

\begin{example}\label{coef_example_2}
Let $f = y^2, g = x^2y^2+x^2y\in Q(a,-1,c)$. It follows that
\begin{align*}
    fg &= \left(y^2\right)\left(x^2y^2+x^2y\right) = x^{2}y^{4} + x^{2}y^{3}, \quad {\rm and}  \\ 
    gf &= \left(x^2y^2+x^2y\right)\left(y^2\right) = x^{2}y^{4} + x^{2}y^{3},
\end{align*}

whence
\begin{align*}
    F(s,t) &\ = \texttt{Res}(f-s,g-t) = {\rm det}\begin{bmatrix}
-s & 0 & 0 & y^{2}-s\\
0 & -s & 0 & y^{3}-sy\\
-t & x^2 & x^{2} & x^2y^2+x^2y-t \\
0 & -t & x^{2} & x^{2}y^{3} + x^{2}y^{2} -ty
     \end{bmatrix}\\
     &\ = s^2x^4y^3 - sx^4y^3 - stx^2y^3 + stx^2y^2.
\end{align*}

After some computations, $F(s,t) = x^{2}ys - (y-1)t - x^{2}y$. It can be seen that
\begin{align*}
    F(f,g) = x^{2}y^{3} - (y-1)(x^{2}y^{2}+x^{2}y)-x^{2}y = 0.
\end{align*}

This example allows us to illustrate the $l$th subresultant of $f$ and $g$ with $l=1$:
\[
F_1(s,t) = \texttt{sRes}_{1}(f-s,g-t) = {\rm det}\begin{bmatrix}
-s & y^2-s \\
-t & x^2y^2+x^2y-t \end{bmatrix} = ty-x^{2}(y+1)s.
\]

By replacing $f$ and $g$, we get $F_{1}(f,g) = (x^{2}y^{2})y-x^{2}(y+1)y^2 = 0$.
\end{example}

Examples \ref{coef_example_1} and \ref{coef_example_2} show an interesting behavior with respect to the coefficients of the curve. The same behavior can be seen in \cite[Example 5.3] {Larsson2014} in the sense that the curve is not defined strictly on the coefficient field $\Bbbk$. A natural question would  be to determine in which cases the curve must have coefficients in $\Bbbk$.

\section{Future work}

Rosengren \cite{Rosengren2000} proved a {\em binomial formula} for the quadratic algebras studied in this paper, so we consider interesting to continue investigating a $\mathcal{BC}$ theory for these algebras that is as general as possible. Also, the research on $\mathcal{BC}$ theory for noncommutative algebras defined by two indeterminates can be continued for {\em PBW deformations of Artin-Schelter regular algebras} investigated by Gaddis in his PhD thesis \cite{GaddisPhD2013, Gaddis2016} and families of more general noncommutative algebras such as those considered by Lezama et al. \cite{LezamaLatorre2017, LezamaVenegas2020} (see also \cite{NinoReyes2023, ReyesSarmiento2022}). 

On the other hand, recently Previato et al. \cite{Previatoetal2023} defined the {\em matrix differential resultant} and use it to compute the Burchnall-Chaundy curve $\mathcal{BC}$ of a pair of commuting matrix ordinary differential operators (MODOs). They proved that this resultant provides the necessary and sufficient condition for the spectral problem to have a solution, and then by considering the Picard-Vessiot theory (c.f. \cite{Larsson2014}), they described explicitly isomorphisms between commutative rings of (MODOs) and a finite product of rings of irreducible algebraic curves. It is interesting to investigate the matrix differential resultant and its consequences for the quadratic algebras considered in this paper.

\end{document}